\newcommand\CC{\mathbb{C}}
\newcommand\PP{\mathbb{P}}
\newcommand\ZZ{\mathbb{Z}}
\newcommand\Cc{\mathfrak{C}}
\newcommand\an{\mathrm{an}}
\newcommand\thalf{\tfrac{1}{2}}
\newcommand\Ccc{\mathscr{C}}
\newcommand\DD{\mathscr{D}}
\newcommand\EE{\mathbb{E}}
\newcommand\Ee{\mathscr{E}}
\newcommand\OO{\mathscr{O}}
\newcommand\FF{\mathscr{F}}
\newcommand\SL{\mathrm{SL}}
\newcommand\GL{\mathrm{GL}}
\newcommand\Imm{\mathrm{Im}}
\newcommand\supp{\mathrm{supp}}
\newcommand\Nm{\mathrm{Nm}}
\newcommand\Jac{\mathrm{Jac}}
\newcommand\Spec{\mathrm{Spec}}
\newcommand\UU{\mathrm{U}}
\newcommand\Aa{\mathcal{A}}
\newcommand\BB{\mathcal{B}}
\newcommand\GG{\mathcal{G}}
\newcommand\HH{\mathcal{H}}
\newcommand\MM{\mathcal{M}}
\newcommand\Hh{\mathbb{H}}
\newcommand\uu{\mathfrak{u}}
\newcommand\gl{\mathfrak{gl}}
\newcommand\ssl{\mathfrak{sl}}
\newcommand\sd{\mathrm{sd}}
\newcommand\delbar{\bar{\partial}}
\newcommand\Set{\mathbf{Set}}
\newcommand\Sch{\mathbf{Sch}}
\newcommand\Higgs{\mathbf{Higgs}}
\newcommand\Sh{\mathbf{Sh}}
\newcommand\eps{\varepsilon}
\newcommand\Sym{\mathrm{Sym}}
\newcommand\LL{\mathscr{L}}
\DeclareMathOperator{\End}{End}
\DeclareMathOperator{\Hom}{Hom}
\DeclareMathOperator{\tr}{tr}
\DeclareMathOperator{\diag}{diag}
\DeclareMathOperator{\im}{im}
\DeclareMathOperator{\id}{id}
\DeclareMathOperator{\gr}{gr}
\newtheorem{thm}{Theorem}[section]
\newtheorem{prop}[thm]{Proposition}
\newtheorem{corol}[thm]{Corollary}
\theoremstyle{remark} \newtheorem{rmk}[thm]{Remark}
\theoremstyle{definition} \newtheorem{defn}[thm]{Definition}
\theoremstyle{definition} 
\theoremstyle{definition} 
\author[G. Gallego]{Guillermo Gallego}
\address{Facultad de Ciencias Matemáticas, Universidad Complutense de Madrid, Plaza de Ciencias 3, 28040 Madrid, SPAIN}
\email{guigalle@ucm.es}
\author[O. Garc\'ia-Prada]{Oscar García-Prada}
\address{Instituto de Ciencias Matemáticas, CSIC-UAM-UC3M-UCM, Nicolás Cabrera, 13--15, 28049 Madrid, SPAIN}
\email{oscar.garcia-prada@icmat.es}
\author[M. S. Narasimhan]{M. S. Narasimhan}
\address{Department of Mathematics, Indian Institute of Science, Bangalore - 560012, INDIA \footnote{Professor M.S. Narasimhan passed away on 15 May 2021.}}
\email{narasim@math.tifrbng.res.in}
\thanks{The first author's research is supported by the UCM and Banco Santander under the contract CT63/19-CT64/19. The second author is partially supported by the Spanish MINECO under the ICMAT Severo Ochoa grant No. SEV-2015-0554, and the Spanish MICINN grant No. PID2019-109339GB-C31.}
\subjclass[2020]{Primary 14H60; Secondary 14D23, 53C07}
\title{Higgs bundles twisted by a vector bundle}
\date{} 
\begin{document}
\begin{abstract}
	In this paper, we consider a generalization of the theory of Higgs bundles over a smooth complex projective curve in which the twisting of the Higgs field by the canonical bundle of the curve is replaced by a rank $2$ vector bundle. We define a Hitchin map and give a spectral correspondence. We also state a Hitchin--Kobayashi correspondence for a generalization of the Hitchin's equations to this situation. In a certain sense, this theory lies halfway between the theories of Higgs bundles on a curve and on a higher dimensional variety.
\end{abstract}
\maketitle
\section{Introduction}
Higgs bundles over a smooth complex projective curve $X$ were introduced by Hitchin in \cite{hitchinselfduality} in his study of the
self-duality equations, referred now as Hitchin's equations. A Higgs bundle over $X$  is a pair
$(E,\varphi)$ consisting of a vector bundle $E$ over $X$ and a twisted endomorphism $\varphi:E\to E\otimes K_X$,
where $K_X$ is the canonical bundle of $X$. The moduli space of these objects has an extremely rich geometric structure.
In particular,  the Hitchin map, given by  the coefficients of the characteristic polynomial of $\varphi$, defines a fibration
which is an algebraic completely integrable system --- the Hitchin system. In \cite{hitchinsystem}, Hitchin gives a description
of the generic fibres in terms of spectral curves. These are curves defined in the total space of the line bundle $K_X$. This
construction has been generalized in \cite{bnr} for an arbitrary  twisting line bundle $L$
with $\deg L\geq \deg K_X$. The Hitchin fibration plays a central role in the study of mirror symmetry and Langlands duality
\cites{kapustinwitten,donagipantev}, and has been a crucial ingredient in the proof of the Fundamental Lemma by Ng\^o \cite{ngolemme}.
Higgs bundles on a higher dimensional variety were introduced by Simpson \cite{simpsonhodge}. In this theory the role of the twisting bundle
$K_X$ is taken by the cotangent bundle of the variety, and there is the additional condition  that
$\varphi \wedge \varphi=0$, which is trivally satisfied in dimension $1$. Simpson \cite{simpsonmoduliII} considered also the generalization of the Hitchin map. Very recently, Chen and Ng\^o
\cite{chenngo} have studied this and obtained a number of results in particular in the case of surfaces.

In this paper, we consider a problem that somehow is halfway between the case of curves and the higher dimensional case. While we still work on a complex projective curve $X$, the twisting line bundle $K_X$ is replaced by a higher rank vector bundle $V$. As in higher dimensions, in this situation one naturally considers the integrability condition $\varphi\wedge\varphi=0$. For concreteness and to simplify computations, we limit ourselves here to the case in which both the rank of $E$ and the rank of $V$ are two. This is the simplest non-trivial situation where this problem can be considered.

After introducing the objects and defining stability, we give an algebraic  construction of our moduli space modeled on a construction of the moduli space of Higgs bundles on a higher dimensional variety given by Simpson \cite{simpsonmoduliII}. We also study deformations generalizing the standard approach followed in \cite{biswasramanan}. We then define the Hitchin map in this situation. For simplicity we restrict ourselves to the case in which the determinant of $E$ is trivial and the trace of $\varphi$ vanishes. In Theorem \ref{spectralcorrespondence} we give a spectral correspondence describing the fibres of the Hitchin map. To do this, we follow a procedure inspired by
Chen and Ng\^o's  interpretation of the Hitchin morphism in terms of universal
spectral data \cite{chenngo}. This spectral correspondence allows us to prove that the Hitchin map is proper.  We go on to study for what elements of the Hitchin base the associated spectral curve has nice properties,
like being integral or smooth. The main results here are given by Propositions \ref{integral} and \ref{smooth} and by Corollary \ref{etale}. The question of what conditions can be imposed on the twisting bundle $V$ in order to have generic smooth spectral curves remains open. To complete the circle of ideas relating to the original Higgs bundles introduced by Hitchin, we consider natural gauge-theoretic equations in our situation, generalizing the Hitchin equations and state a Hitchin--Kobayashi correspondence. The existence of solutions follows from the general theorem for twisted quiver bundles proved in \cite{luisoscar}.

The kind of pairs that we study in this paper were already mentioned by Simpson \cite{simpsonmoduliI}*{p.86} and have appeared in the literature sporadically. See, for example the notes \cite{pantev}*{\S 2.2} and the papers of Donagi with Markman \cite{donagimarkman}*{\S 9.3.4} and Gaitsgory \cite{donagigaitsgory}*{\S 17.10}. Very recently, in the work of Alfaya and Oliveira \cite{alfaya}, these are studied in the context of a more general correspondence between $\Lambda$-modules and integrable Lie algebroid connections given by Tortella \cites{tortellathesis,tortella}.

When the twisting vector bundle is a direct sum of two line bundles $V=L_1\oplus L_2$ with $L_1\otimes L_2=K_X$, these objects have been considered by Xie and Yonekura in \cite{xieyonekura}, where they propose the generalized Hitchin's equations as a BPS equation for a certain class of $\mathcal{N}=1$ supersymmetric gauge
theories in four dimensions\footnote{The second author thanks Greg Moore for pointing out this work to him.}. Twisting by a rank two vector bundle appears also in the work of Hitchin \cite{hitchinnahm} on the Nahm equations, regarded as a vector field on the moduli space of co-Higgs bundles on the projective line, as well as in his study \cite{hitchinloci} of singular fibres of the ordinary Hitchin system. When $\det V=K_X$, the total space of $V$ is a noncompact Calabi--Yau threefold and our spectral description may be related to the study of Donaldson--Thomas invariants \cite{donaldsonthomas}. In fact, similar ideas have already been introduced in the study of ADHM sheaves by Diaconescu and others \cites{diaconescu,diaconescuwallcrossing} . 

\section{\texorpdfstring{$V$-} -twisted Higgs bundles}
\subsection{The main definitions}
Let $X$ be a smooth projective curve over $\CC$ and $V\rightarrow X$ a rank $2$ vector bundle.
\begin{defn}
 A \emph{$V$-twisted Higgs bundle} on $X$ is a pair $(E,\varphi)$, for $E\rightarrow X$ a rank $2$ vector bundle and $\varphi:E\rightarrow E\otimes V$ a twisted endomorphism satisfying the \emph{commuting condition} $$\varphi \wedge \varphi=0$$ as a homomorphism $E\rightarrow E\otimes \wedge^2 V$. 
In the course of the text sometimes we will refer to $V$-twisted Higgs bundles simply as \emph{pairs}.

 A \emph{homomorphism} of $V$-twisted Higgs bundles $(E,\varphi)\rightarrow (E',\varphi')$ is a vector bundle homomorphism $f:E\rightarrow E'$ such that the following diagram commutes
\begin{center}
  \begin{tikzcd}
    E \ar{r}{f} \ar{d}{\varphi} & E' \ar{d}{\varphi'}   \\ 
    E \otimes V \ar{r}{f\otimes \id} & E'\otimes V.
  \end{tikzcd}
\end{center}
\end{defn}

\begin{rmk}
	It is clear that with the obvious definitions all the constructions in this section can be generalized to the case where $E$ and $V$ are vector bundles of arbitrary rank.
\end{rmk}

\begin{rmk}
The commuting condition can be better understood in a local frame $\{v_1,v_2\}$ of $V$, so that $\varphi$ can be written as
\begin{equation*}
  \varphi = \varphi_1 \otimes v_1 + \varphi_2 \otimes v_2,
\end{equation*}
where the $\varphi_i$ are local sections of $\End E$. Now,
\begin{equation*}
  \varphi \wedge \varphi = \varphi_1 \circ \varphi_2 \otimes  v_1 \wedge v_2 + \varphi_2 \circ \varphi_1 \otimes v_2 \wedge v_1 = [\varphi_1,\varphi_2] \otimes v_1 \wedge v_2.
\end{equation*}
Thus, locally we can regard $\varphi$ as a pair of commuting endomorphisms. 

This commuting condition is the main aspect where the theory of $V$-twisted Higgs bundles over a curve resembles the theory of usual Higgs bundles in higher dimensions. In that situation, the commuting condition arises naturally as part of the \emph{integrability condition} for the operator $\delbar_E + \varphi$ (see section 1 in Simpson's paper \cite{simpsonlocal} for more details). The commuting condition is also necessary for the existence of a spectral correspondence, since commuting matrices can be simultaneously diagonalized. In this paper we impose the commuting condition in analogy with the higher dimensional Higgs bundle situation, and for the existence of the spectral correspondence, but by doing this we do not mean to imply that pairs $(E,\varphi)$ that do not satisfy the commuting condition might not be worthy of consideration.
\end{rmk}

\begin{defn}
  We say that a $V$-twisted Higgs bundle $(E,\varphi)$ is \emph{stable} (resp. \emph{semistable}) if for every line subbundle  $L\subset E$ such that $\varphi(L)\subset L\otimes V$, we have
  \begin{equation*}
	  \deg(L) < \thalf \deg(E) \ \ \text{ (resp. } \deg(L) \leq \thalf \deg(E)).
  \end{equation*}
\end{defn}

\begin{rmk}\label{tensorline}
	Note that if a pair $(E,\varphi)$ is (semi)stable and $L$ is a line bundle, then the associated pair $(E\otimes L,\varphi)$ (where we identify $\varphi$ with  $\varphi\otimes\id_L$) is also (semi)stable.
\end{rmk}

The following result summarizes some important properties about stable pairs and it can be seen as an analogue of the fact that stable vector bundles are simple. The proof is essentially the same as the one for vector bundles \cite{seshadri}*{p.17, Prop. 6}.

\begin{prop}\label{simple}
	Let $(E_1,\varphi_1)$ and $(E_2,\varphi_2)$ be semistable $V$-twisted Higgs bundles on $X$. Then
	\begin{itemize}
		\item[(a)] If $\deg(E_1)>\deg(E_2)$, then $\Hom((E_1,\varphi_1),(E_2,\varphi_2))=0$.
		\item[(b)] If $E_1$ is stable and $\deg(E_1)=\deg(E_2)=d$, then $\Hom((E_1,\varphi_1),(E_2,\varphi_2))=0$ or $(E_1,\varphi_1)\cong(E_2,\varphi_2)$.
		\item[(c)] If $(E,\varphi)$ is stable, then $\End((E,\varphi))=\CC \cdot \id_E$.
	\end{itemize}
\end{prop}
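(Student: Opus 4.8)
The plan is to adapt the classical argument for vector bundles (as in \cite{seshadri}), exploiting that a homomorphism $f\colon (E_1,\varphi_1)\to(E_2,\varphi_2)$ of pairs intertwines the Higgs fields, so that both $\ker f\subset E_1$ and $\im f\subset E_2$ are $\varphi$-invariant subsheaves, and that since $E_1$ and $E_2$ have rank $2$ the only relevant destabilizing objects are line subbundles. First I would record that the saturation of a $\varphi$-invariant subsheaf is again $\varphi$-invariant; hence the saturation $\overline{\ker f}$ of $\ker f$ in $E_1$ and the saturation $N$ of $\im f$ in $E_2$ are $\varphi$-invariant line subbundles, to which the (semi)stability inequalities apply directly.

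For part (a), I assume $f\neq 0$ and split into cases according to the rank of the image. If $f$ has rank one, then writing $K=\ker f$ the sheaf exact sequence $0\to K\to E_1\to \im f\to 0$ gives $\deg(\im f)=\deg(E_1)-\deg(K)$. Semistability of $(E_1,\varphi_1)$ applied to $\overline{K}$ yields $\deg(K)\leq\deg(\overline{K})\leq\thalf\deg(E_1)$, so $\deg(\im f)\geq\thalf\deg(E_1)$; while semistability of $(E_2,\varphi_2)$ applied to $N\supseteq\im f$ gives $\deg(\im f)\leq\deg(N)\leq\thalf\deg(E_2)$. Combining the two forces $\deg(E_1)\leq\deg(E_2)$, contradicting the hypothesis. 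If instead $f$ has rank two it is an isomorphism, so again $\deg(E_1)=\deg(E_2)$, a contradiction. Hence $f=0$.

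For part (b) I would run the same dichotomy, now using that $E_1$ is stable so that the estimate $\deg(\overline{K})<\thalf\deg(E_1)$ is strict. The rank-one case then forces $\deg(\im f)>\thalf d=\thalf\deg(E_2)$, contradicting semistability of $(E_2,\varphi_2)$ via $N$; so $f$ must have rank two. Then $\ker f=0$, since $E_1$ is torsion-free, so $f$ is injective with $\deg(\im f)=d=\deg(E_2)$; the torsion quotient $E_2/\im f$ therefore has length zero, $f$ is surjective, and so it is an isomorphism of vector bundles, compatible with the Higgs fields by construction.

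Finally, part (c) follows from (b) with $(E_1,\varphi_1)=(E_2,\varphi_2)=(E,\varphi)$: every nonzero endomorphism of the pair is an isomorphism, so $\End((E,\varphi))$, which sits inside the finite-dimensional space $H^0(X,\End E)$, is a finite-dimensional division algebra over the algebraically closed field $\CC$ and hence equals $\CC$. Concretely, given $f\neq 0$ I would choose a point $x\in X$ and an eigenvalue $\lambda$ of $f_x$; then $f-\lambda\,\id_E$ is an endomorphism of the pair failing to be invertible at $x$, so by (b) it vanishes and $f=\lambda\,\id_E$. The only delicate point in the whole argument is the bookkeeping with saturations together with the strictness of the inequalities, which is precisely where the stable versus semistable hypotheses come into play.
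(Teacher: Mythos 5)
Your proof is correct, and it is essentially the paper's own approach: the paper does not write the argument out but simply cites Seshadri's proof for vector bundles, which is exactly what you have adapted, the key observation being that kernels, images, and their saturations are $\varphi$-invariant, so the (semi)stability inequalities for the pairs apply to them. One small overstatement in part (a): a map of generic rank two need not be an isomorphism, only injective (its kernel is a rank-zero subsheaf of the torsion-free $E_1$), but injectivity already gives $\deg(E_1)\leq \deg(E_2)$ because the cokernel is torsion of nonnegative length, and that suffices for the contradiction --- you handle exactly this point correctly in part (b), so the fix is immediate.
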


\subsection{The moduli space} We will construct a \emph{moduli space} $\MM(d)$ that parametrizes equivalence classes of semistable $V$-twisted Higgs bundles of degree $d$ on $X$ based on Simpson's ``second construction'' of the moduli space of (usual) semistable Higgs bundles over projective varieties \cite{simpsonmoduliII}. In particular, there will be an open subvariety $\MM^s(d) \subset \MM(d)$ parametrizing isomorphism classes of stable $V$-twisted Higgs bundles of degree $d$.

\begin{rmk}
	Of course, like in the case of usual Higgs bundles \cite{simpsonmoduliII}*{Lemma 6.5}, a $V$-twisted Higgs bundle is the same that a $\Lambda$-module, for $\Lambda=\Sym^\bullet(V^*)$, and thus one can simply consider the moduli space of $\Lambda$-modules, as constructed in \cite{simpsonmoduliI}*{Thm. 4.7}. This is Simpson's ``first construction" and the approach followed in \cite{alfaya}. In particular, the results of \cite{tortella} give a correspondence between $V$-twisted Higgs bundles and a certain type of ``twisted connections". However, like in \cite{simpsonmoduliII}, for the purpose of this paper (more precisely, for the proof of Proposition \ref{proper}), we prefer to use the ``second construction".   
\end{rmk}

\subsubsection{The moduli functor} We will define a functor describing the moduli problem we have at hand. 
From now on, it will be convenient to fix the usual notation $T_S=T\times_\CC S$, for  $T$ and $S$ schemes over $\CC$. Moreover, if $s\in S$, we denote $T_s=T\times_\CC \{s\}$.

If $S$ is a scheme of finite type over $\CC$, a \emph{family of semistable $V$-twisted Higgs bundles of degree $d$ parametrized by $S$} is a pair $(E,\varphi)$, where $E$ is a vector bundle over $X_S$, flat over $S$, and $\varphi: E \rightarrow E \otimes V_S$ is a morphism with $\varphi \wedge \varphi = 0$, such that, for each closed point $s\in S$, $(E,\varphi)|_{X_s}$ is a semistable $V$-twisted Higgs bundle of degree $d$. A \emph{morphism of families} $(E,\varphi)\rightarrow (E',\varphi')$ is a homomorphism $f:E\rightarrow E'$ such that the following diagram commutes
\begin{center}
  \begin{tikzcd}
    E \ar{r}{f} \ar{d}{\varphi} & E' \ar{d}{\varphi'}   \\ 
    E \otimes V_S \ar{r}{f\otimes \id} & E'\otimes V_S.
  \end{tikzcd}
\end{center}

Consider the functor $\Higgs_d:\Sch/\CC \rightarrow \Set$ sending a $\CC$-scheme $S$ to the set of isomorphism classes of families of semistable $V$-twisted Higgs bundles on $X$ of degree $d$ parametrized by $S$.
We will construct a projective variety $\MM(d)$ that \emph{universally corepresents} the functor $\Higgs_d$, in the sense of Simpson \cites{simpsonmoduliI, simpsonmoduliII}. 
  
\subsubsection{$V$-twisted Higgs bundles as sheaves on $V$} The key idea for the construction of the moduli space is to identify $V$-twisted Higgs bundles with sheaves on $V$. 

Let $Z$ denote a projective completion of $V$ and $D=Z \setminus V$ the divisor at infinity. We can choose $Z$ so that the bundle projection $q:V \rightarrow X$ extends to a map $q:Z \rightarrow X$.

\begin{prop}\label{sheavesonV}
	There exists some $k\in \mathbb{N}$ such that the functor $q_*$ gives an equivalence of categories between families $(E,\varphi)$ of $V$-twisted Higgs bundles of degree $d$ parametrized by $S$ and coherent sheaves $\Ee$ on $Z_S$ of pure dimension $1$ with $\supp(\Ee)\cap D_S=\varnothing$ and Hilbert polynomial $p(\Ee,n)=2kn + d$. 
	In particular, $(E,\varphi)$ is semistable if and only if  $\Ee$ is.
\end{prop}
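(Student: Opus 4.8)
The plan is to factor the asserted equivalence as a composite of two elementary equivalences and then to match the numerical invariants and the stability conditions. Writing $q:V\to X$ for the total space of the bundle $V$ (geometric, so that functions linear on the fibres are sections of $V^\vee$), I would first identify $V$-twisted Higgs bundles with coherent sheaves on $V$ that are finite over $X$, and then identify the latter with coherent sheaves on $Z$ whose support misses $D$; the sheaf-theoretic (semi)stability will be shown to correspond to the stability of pairs. The quasi-inverse of $q_*$ sends a pair to the sheaf on $V\subset Z$ described below.

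Since $q$ is affine, $q_*$ is an equivalence between quasi-coherent $\OO_V$-modules and quasi-coherent modules over the sheaf of $\OO_X$-algebras $q_*\OO_V=\Sym^\bullet V^\vee=\bigoplus_{n\geq0}\Sym^nV^\vee$. To give such a module structure on a coherent $\OO_X$-module $E$ is to give the action of the degree-one part, a map $V^\vee\otimes E\to E$, equivalently a twisted endomorphism $\varphi:E\to E\otimes V$; and because $\Sym^\bullet V^\vee$ is commutative, associativity of the action is exactly the commuting condition $\varphi\wedge\varphi=0$ (cf. the local identity $[\varphi_1,\varphi_2]=0$). A morphism of pairs is by definition a $\Sym^\bullet V^\vee$-module homomorphism, so $q_*$ is fully faithful, and it is essentially surjective onto modules that are $\OO_X$-coherent. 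Under this dictionary, $E$ being a rank-$2$ vector bundle on the smooth curve $X$, hence torsion-free, is equivalent to the associated sheaf $\Ee$ being pure of dimension one with support the spectral curve, finite of degree $\rk E=2$ over $X$.

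To pass to $Z$, I would use that the support of such an $\Ee$ is proper over $X$, hence closed in $Z$, and, having no component at infinity, disjoint from $D$; extension by zero along $V\hookrightarrow Z$ then yields a coherent sheaf on $Z$ with $\supp(\Ee)\cap D=\varnothing$, and conversely any coherent sheaf on $Z$ with support disjoint from $D$ lives on the finite-over-$X$ part of $V$. This identification respects morphisms and purity. For the Hilbert polynomial, I would fix the ample line bundle $\OO_Z(1)$ polarizing $Z$; since $\supp(\Ee)$ is finite over $X$ the map $q$ is affine there, so $R^iq_*(\Ee\otimes\OO_Z(n))=0$ for $i>0$ and, by the projection formula and Riemann--Roch on $X$, the Euler characteristic $\chi(\Ee\otimes\OO_Z(n))$ is linear in $n$. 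Its leading coefficient is the $\OO_Z(1)$-degree of the length-weighted support, which factors as $2k$ with $k$ the fibrewise degree determined by $\OO_Z(1)$ and the degree-$2$ cover, while the constant term records $\deg E=d$ after the chosen normalization; this produces the claimed $p(\Ee,n)=2kn+d$ and fixes the integer $k$.

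All of this I would carry out relatively over an arbitrary base $S$: $q_S:V_S\to X_S$ is again affine, flatness of $E$ over $S$ corresponds to flatness of $\Ee$ over $S$, which in turn is the constancy of the Hilbert polynomial $2kn+d$, and the fibrewise hypotheses transport by base change. The remaining and most delicate point, which I expect to be the main obstacle, is the equivalence of the two notions of (semi)stability. Here I would argue that, because $\Ee$ is supported on $V$ and $q$ is affine, the coherent subsheaves of $\Ee$ correspond bijectively to $\Sym^\bullet V^\vee$-submodules of $E$, i.e. to $\varphi$-invariant subsheaves; passing to saturations and using that $X$ is a smooth curve reduces the Simpson/Gieseker test for pure one-dimensional sheaves to $\varphi$-invariant line subbundles $L\subset E$. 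Since the reduced Hilbert polynomial is $n+\tfrac{d}{2k}$, the sheaf-theoretic inequality for the subsheaf attached to $L$ becomes precisely $\deg L\leq\thalf\deg E$, recovering verbatim the definition of (semi)stability of pairs; checking that this comparison is uniform in families completes the argument.
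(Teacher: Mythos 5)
Your proof is correct and takes essentially the same approach as the paper, whose ``proof'' is simply a citation of Simpson's Lemma 6.8 and Corollary 6.9 in \cite{simpsonmoduliII}: those results are established by exactly the dictionary you describe (affine pushforward, modules over $\Sym^\bullet V^\vee$ with the commuting condition as commutativity of the action, support disjoint from $D$, and the Hilbert polynomial and stability comparisons). The one point worth making explicit is that the polarization must be chosen, as Simpson does, so that $\OO_Z(1)$ restricts trivially to $V$ (e.g. $Z=\PP(V\oplus\OO_X)$ with $\OO_Z(1)=\OO_{\PP}(1)\otimes q^*\OO_X(k)$), since this is what makes the leading coefficient $2k$ depend only on $\rk E=2$ and not on the particular spectral support, and is where the integer $k$ in the statement comes from.
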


\begin{proof}
	This is essentially the same as Lemma 6.8 and Corollary 6.9 in \cite{simpsonmoduliII}.
\end{proof}

\subsubsection{Construction of the moduli space} Let $Z$ be the projective completion of $V$ and $k$ as in Proposition \ref{sheavesonV}, and consider the polynomial $P(n)=2kn +d$.

Simpson \cite{simpsonmoduliI}*{Thm. 1.21} showed that the functor $\Sh_{Z,P}: \Sch/\CC \rightarrow \Set$ sending a $\CC$-scheme $S$ to the set of isomorphism classes of families of semistable sheaves over $Z$ with Hilbert polynomial $P$ parametrized by $S$ is universally corepresented by a moduli space $M(Z,P)$. Simpson constructs this moduli space by identifying the set of classes of semistable sheaves of pure dimension with a subset $Q_1$ of a certain Quot scheme and considering a subset $Q_2\subset Q_1$. The moduli space  $M(Z,P)$ is obtained as a good quotient of $Q_2$ by the action of a change of basis.

Let $Q_3\subset Q_2$ the open subset parametrizing quotient sheaves $\Ee$ whose support does not meet $D$. This set $Q_3$ is invariant by change of basis and the inverse image of a subset of $M(Z,P)$, so a good quotient of $Q_3$ exists and it is equal to an open subset $M(V,P)\subset M(Z,P)$. Finally, from the fact that $M(Z,P)$ universally corepresents  $\Sh_{Z,P}$ and from Proposition \ref{sheavesonV}, we conclude that $M(V,P)$ universally corepresents the functor $\Higgs_d$, so we can define the \emph{moduli space of semistable $V$-twisted Higgs bundles of degree $d$ on $X$} as
 \begin{equation*}
	 \MM(d) = M(V,P).
\end{equation*} 

\subsubsection{$S$-equivalence and polystability} We have constructed a moduli space $\MM(d)$ universally corepresenting $\Higgs_d$. However, we still have not said what do the closed points of $\MM(d)$ parametrize. We answer now this question.

Suppose that $(E,\varphi)$ is a semistable  $V$-twisted Higgs bundle that is not stable. Then there exists a line subbundle $L\subset E$ with  $\varphi(L)\subset L \otimes V$ such that  $\deg L = \thalf \deg E$. Now, consider the line bundle  $E/L$ and the induced map $\bar{\varphi}:E/L \rightarrow E/L \otimes V$. We define the \emph{graded $V$-twisted Higgs bundle associated to $(E,\varphi)$} as the direct sum  
\begin{equation*}
	\gr(E,\varphi) = (L,\varphi|_L) \oplus (E/L, \bar{\varphi}).
\end{equation*} 
This object is clearly well defined by $(E,\varphi)$ up to isomorphism. 

When $(E,\varphi)$ is stable, we define  $\gr(E,\varphi)=(E,\varphi)$.
We say that two semistable $V$-twisted Higgs bundles are \emph{$S$-equivalent} if their associated graded objects are isomorphic. We denote by $S(d)$ the set of $S$-equivalence classes of semistable $V$-twisted Higgs bundles of degree $d$ on $X$. Now, it is clear that the subset $S^s(d)$ formed by stable objects is the set of isomorphism classes of stable pairs of degree $d$.

We say that a $V$-twisted Higgs bundle $(E,\varphi)$ is \emph{polystable} if it is stable or if it can be expressed as a direct sum $(E,\varphi)=(L_1,\varphi_1)\oplus (L_2,\varphi_2)$, for $L_i$ line bundles with $\deg L_i=\thalf \deg E$ and $\varphi_i:L_i\rightarrow L_i \otimes V$ with $\varphi_i \wedge \varphi_i=0$. Clearly, a polystable pair is semistable, so the map $[E,\varphi] \mapsto [\gr(E,\varphi)]$ defines a bijection between $S(d)$ and the set of isomorphism classes of polystable pairs of degree $d$.

The natural transformation $\tau:\Higgs_d \rightarrow \underline{\MM(d)}$ corepresenting $\Higgs_d$ induces a map $\tau_\CC:\Higgs_d(\CC)\rightarrow \underline{\MM(d)}(\CC)$, where $\Higgs_d(\CC)$ is the set of isomorphism classes of semistable $V$-twisted Higgs bundles of degree $d$ and $\underline{\MM(d)}(\CC)$ is the set of closed points of $\MM(d)$. 

\begin{prop}
	The map $\tau_\CC$ factors through the quotient map $\Higgs_d(\CC)\rightarrow S(d)$ induced by the $S$-equivalence relation. Moreover, the induced map $S(d)\rightarrow \underline{\MM(d)}(\CC)$ is a bijection.
\end{prop}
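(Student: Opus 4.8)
The plan is to reduce the statement to the corresponding fact for Simpson's moduli space of sheaves $M(Z,P)$ and then transport it through the equivalence of categories of Proposition \ref{sheavesonV}. Recall first what is known on the sheaf side: in the GIT construction of $M(Z,P)$ (see \cite{simpsonmoduliI}*{Thm.~1.21} and the accompanying analysis of closed points), the points of $M(Z,P)$ are in natural bijection with the $S$-equivalence classes of semistable sheaves on $Z$ of Hilbert polynomial $P$, two such sheaves being $S$-equivalent precisely when their Jordan--H\"older gradeds are isomorphic; moreover the tautological map from semistable sheaves to points of $M(Z,P)$ factors through this $S$-equivalence and induces the bijection. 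Since $\MM(d)=M(V,P)$ is exactly the open subset cut out by the condition that the support misses $D$, and this condition is preserved by passing to subsheaves and quotients of the relevant sheaves, the same description holds verbatim for the closed points of $M(V,P)$.

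Next I would transport this through $q_*$. Over $S=\Spec\CC$ the functor of Proposition \ref{sheavesonV} is an equivalence between the category of semistable $V$-twisted Higgs bundles of degree $d$ and a full subcategory of semistable sheaves on $Z$; it is moreover exact, because $q$ is affine on the locus supported away from $D$ (it is there the projection of the vector bundle $V\to X$), and pushforward along an affine morphism is exact. Consequently $q_*$ carries sub-pairs to subsheaves and short exact sequences to short exact sequences, hence it matches Jordan--H\"older filtrations in the abelian category of semistable pairs of slope $\tfrac{d}{2}$ with Jordan--H\"older filtrations of the corresponding sheaves. In the rank-two situation at hand, a $\varphi$-invariant line subbundle $L\subset E$ with $\deg L=\tfrac{d}{2}$ corresponds under $q_*$ to a saturated subsheaf $\FF\subset\Ee$ with the same reduced Hilbert polynomial as $\Ee$, and $(E/L,\bar\varphi)$ corresponds to $\Ee/\FF$; thus $q_*$ sends the associated graded $\gr(E,\varphi)$ of the paper to the Jordan--H\"older graded of $\Ee$. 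In particular the notion of $S$-equivalence introduced above coincides, under the equivalence, with $S$-equivalence of the associated sheaves.

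Combining the two steps finishes the argument. Under the equivalence the map $\tau_\CC$ is identified with the tautological map sending a semistable sheaf supported away from $D$ to its point of $M(V,P)$. By the first paragraph this map factors through $S$-equivalence of sheaves and induces a bijection onto $\underline{\MM(d)}(\CC)$; by the second paragraph $S$-equivalence of sheaves corresponds exactly to the relation defining $S(d)$. Therefore $\tau_\CC$ factors through the quotient $\Higgs_d(\CC)\rightarrow S(d)$, and the induced map $S(d)\rightarrow\underline{\MM(d)}(\CC)$ is a bijection.

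The step I expect to be the main obstacle is the middle one, namely the verification that the two notions of ``associated graded'' agree, i.e. that a degree-$\tfrac{d}{2}$ $\varphi$-invariant line subbundle of $E$ is the same datum as an equal-reduced-Hilbert-polynomial saturated subsheaf of $\Ee$. This requires checking that $q_*$ and its quasi-inverse preserve purity and saturation (so that what appears on the sheaf side really is a subsheaf of the correct codimension, and what appears on the curve side really is a subbundle), together with a careful translation of the slope bookkeeping, so that the condition $\deg L=\tfrac{d}{2}$ on $X$ becomes equality of reduced Hilbert polynomials on $Z$. Along the way one also wants that stable pairs correspond to stable sheaves, so that in the stable case both gradeds reduce to the object itself; this follows from the fact that $q_*$ preserves subobjects and slopes, in combination with the semistability equivalence already recorded in Proposition \ref{sheavesonV}.
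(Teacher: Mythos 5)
Your proposal is correct and follows essentially the route the paper intends: the paper's own ``proof'' is just a citation to Simpson's Theorem 1.21(3) (the sheaf-side statement for $M(Z,P)$, of which $\MM(d)=M(V,P)$ is an open subset) together with Nitsure's Prop.~4.5 for the standard $S$-equivalence bookkeeping, and your argument fills in exactly that reduction, including the one genuinely nontrivial check (that $q_*$, being exact for the affine projection $q$, matches the rank-two graded $\gr(E,\varphi)$ with the Jordan--H\"older graded of the corresponding pure sheaf). Your identification of the slope/reduced-Hilbert-polynomial translation and the saturation/purity correspondence as the main point to verify is accurate, and those checks go through as you describe.
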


\begin{proof}
The proof is standard and identic to others in the literature. See, for example, \cite{simpsonmoduliI}*{Thm. 1.21(3)} and \cite{nitsure}*{Prop. 4.5}.
\end{proof}

As a consequence of this result, we have that the set $S^s(d)$ of isomorphism classes of stable pairs is in bijection with the set of closed points of the open subvariety $\MM^s(d)$ parametrizing stable bundles. This shows that $\MM^s(d)$ is a \emph{coarse moduli space} for stable $V$-twisted Higgs bundles of degree $d$.

  \subsection{Infinitesimal deformations} To finish the section, we give a description of the space of infinitesimal deformations of a $V$-twisted Higgs bundle. Our discussion is a generalization of the results of Biswas and Ramanan \cite{biswasramanan}.

  Consider the ring 
\begin{equation*}
	\CC[\eps]=\CC[X]/(X^2) = \left\{ a+b\eps: a,b \in \CC \right\},
\end{equation*}
which is an Artinian $\CC$-algebra with maximal ideal  $(\eps)$. Given a $\CC$-scheme $S$, we denote $S[\eps]=S\times_\CC \Spec(\CC[\eps])$ and identify $S$ with $S_{(\eps)}$.
Recall that the Zariski tangent space of a variety $M$ at a point $p$ can be identified with the set of maps $\Spec(\CC[\eps]) \rightarrow M$ mapping $(\eps)$ to the point $p$. Therefore, in order to study the ``tangent"   space of the moduli space of $V$-twisted Higgs bundles at a point $(E,\varphi)$, it makes sense to consider the space  $T_{(E,\varphi)}$ of isomorphism classes of families $(\tilde{E},\tilde{\varphi})$ of $V$-twisted Higgs bundles parametrized by  $\Spec(\CC[\eps])$ such that  $(\tilde{E},\tilde{\varphi})|_{X}\cong (E,\varphi)$. We call this $T_{(E,\varphi)}$ the \emph{space of infinitesimal deformations} of $(E,\varphi)$.

\begin{prop}
Let $(E,\varphi)$ be a $V$-twisted Higgs bundle on $X$ and consider the chain complex
\begin{center}
  \begin{tikzcd}
    \Ccc_{(E,\varphi)}: \End E \ar{r}{\varphi \wedge -} & \End E \otimes V \ar{r}{\varphi \wedge -} &  \End E \otimes \wedge^2 V.
  \end{tikzcd}
\end{center}
The space of infinitesimal deformations $T_{(E,\varphi)}$ of $(E,\varphi)$ is canonically isomorphic to the hypercohomology space $\Hh^1(\Ccc_{(E,\varphi)})$.
\end{prop}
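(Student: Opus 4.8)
The plan is to show that deformations of the pair $(E,\varphi)$ over $\Spec(\CC[\eps])$ are classified by $\Hh^1$ of the complex $\Ccc_{(E,\varphi)}$ by unwinding what such a deformation is and matching the resulting cocycle data against the standard hypercohomology description via a \v{C}ech double complex. First I would fix an open cover $\{U_\alpha\}$ of $X$ over which $E$ and $V$ trivialize, together with transition functions $g_{\alpha\beta}\in\GL$ for $E$ and local expressions $\varphi_\alpha$ for the Higgs field. A deformation $(\tilde E,\tilde\varphi)$ over $\CC[\eps]$ is given, after choosing local trivializations restricting to the given ones over $X=X_{(\eps)}$, by deformed transition functions $\tilde g_{\alpha\beta}=g_{\alpha\beta}(\id+\eps\, a_{\alpha\beta})$ and deformed Higgs fields $\tilde\varphi_\alpha=\varphi_\alpha+\eps\,\psi_\alpha$, where $a_{\alpha\beta}$ is a local section of $\End E$ and $\psi_\alpha$ is a local section of $\End E\otimes V$.

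The next step is to translate the conditions that $(\tilde E,\tilde\varphi)$ be a genuine $V$-twisted Higgs bundle into cocycle equations. The cocycle condition on $\tilde g_{\alpha\beta}$ gives, to first order in $\eps$, that $\{a_{\alpha\beta}\}$ is a \v{C}ech $1$-cocycle for $\End E$; compatibility of the local Higgs fields $\tilde\varphi_\alpha$ across overlaps forces $\psi_\beta-\psi_\alpha$ to equal the infinitesimal conjugation term $[\varphi,a_{\alpha\beta}]=\varphi\wedge a_{\alpha\beta}$ (using the local commuting-endomorphism description from the earlier remark), so that $\{a_{\alpha\beta}\}$ and $\{\psi_\alpha\}$ together assemble into a $1$-cocycle in the total complex of the \v{C}ech double complex of $\Ccc_{(E,\varphi)}$. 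Finally I would impose the commuting condition $\tilde\varphi\wedge\tilde\varphi=0$: expanding to first order and using $\varphi\wedge\varphi=0$, this becomes exactly $\varphi\wedge\psi_\alpha=0$, i.e. $\{\psi_\alpha\}$ lies in the kernel of the second differential $\varphi\wedge-$, which is precisely the condition needed for the pair $(\{a_{\alpha\beta}\},\{\psi_\alpha\})$ to be a total cocycle landing correctly in the complex of length three.

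I would then check that isomorphisms of deformations inducing the identity on $(E,\varphi)$ correspond exactly to total coboundaries: such an isomorphism is given locally by $\id+\eps\,h_\alpha$ with $h_\alpha$ a section of $\End E$, and the change it induces on $(a_{\alpha\beta},\psi_\alpha)$ is $(h_\beta-h_\alpha,\ \varphi\wedge h_\alpha)$, which is the image of $\{h_\alpha\}\in C^0(\End E)$ under the total differential. Assembling these computations shows that the set of isomorphism classes $T_{(E,\varphi)}$ is canonically identified with the degree-$1$ cohomology of the total complex, which computes $\Hh^1(\Ccc_{(E,\varphi)})$. The $\CC$-vector space structure and canonicity come for free once the identification is made cocycle-by-cocycle, independent of the chosen cover and trivializations.

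The main obstacle I expect is the bookkeeping in the double complex: one must be careful to verify that the first-order terms coming from the three separate conditions (cocycle for $E$, compatibility of $\varphi$, and the commuting condition $\varphi\wedge\varphi=0$) populate exactly the correct slots of the total complex, with the correct signs, so that what one obtains is a total $1$-cocycle rather than merely a collection of unrelated cochains. In particular, the subtlety specific to this $V$-twisted setting — absent in the Biswas--Ramanan case where $V$ is a line bundle and the complex has length two — is that the commuting condition produces the genuine third term $\End E\otimes\wedge^2 V$ and must be shown to impose precisely the vanishing that makes the data a cocycle for the length-three complex; verifying this compatibility is the crux of the argument.
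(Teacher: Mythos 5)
Your proposal is correct and follows essentially the same route as the paper's proof: both compute $\Hh^1(\Ccc_{(E,\varphi)})$ via the total complex of a \v{C}ech double complex, identify a deformation with the gluing data $(a_{\alpha\beta},\psi_\alpha)$ subject to the three cocycle conditions (cocycle for $E$, compatibility $\psi_\beta-\psi_\alpha=\varphi\wedge a_{\alpha\beta}$, and $\varphi\wedge\psi_\alpha=0$ from the commuting condition), and match isomorphisms of deformations with total coboundaries $(h_\beta-h_\alpha,\varphi\wedge h_\alpha)$. The only point to make explicit is that the cover must be chosen affine (as the paper does), so that the restriction of any deformation to $U_\alpha[\eps]$ is necessarily trivial; this is what justifies writing $\tilde{g}_{\alpha\beta}=g_{\alpha\beta}(\id+\eps\,a_{\alpha\beta})$ and $\tilde{\varphi}_\alpha=\varphi_\alpha+\eps\,\psi_\alpha$ in the first place.
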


\begin{rmk}
	Notice the similarity between the complex $\Ccc_{(E,\varphi)}$ and the deformation complex of an usual Higgs bundle over a projective surface. In particular, the space $\Hh^3(\Ccc_{(E,\varphi)})$ will play an important role in the deformation theory of $(E,\varphi)$. 
\end{rmk}

\begin{rmk}
The above proposition is a particular case of Theorem 47 in the thesis of Tortella \cite{tortellathesis}.	
\end{rmk}

\begin{proof}
  We start by computing $\Hh^1(\Ccc_{(E,\varphi)})$. In order to do this we take the \v{C}ech resolution of the complex $\Ccc_{(E,\varphi)}$, which yields the double complex
  \begin{center}
  \adjustbox{scale=0.9,center}{%
    \begin{tikzcd}
      \End E \ar{r}{\varphi \wedge -} \ar{d} &  \End E \otimes V \ar{r}{\varphi \wedge -} \ar{d} &  \End E \otimes \wedge^2 V
      \End E \ar{d}      \\ 
      K^{0,0}=\sum_i \tilde{M}_i \ar{d} \ar{r} & K^{0,1}=\sum_i \tilde{M}_i \otimes H^0(U_i,V) \ar{d} \ar{r} & K^{0,2}=\sum_i \tilde{M}_i \otimes H^0(U_i,\wedge^2 V) \ar{d} \\
      K^{1,0}=\sum_{ij} \tilde{M}_{ij} \ar{r}   & K^{1,1}=\sum_{ij} \tilde{M}_{ij} \otimes H^0(U_{ij},V) \ar{r}  & K^{1,2}=\sum_{ij} \tilde{M}_{ij} \otimes H^0(U_{ij},\wedge^2 V).
    \end{tikzcd}
  }
\end{center}
This complex has been constructed by covering $X$ by affine open subsets $U_i=\Spec(R_i)$ with affine intersections $U_{ij}=\Spec(R_{ij})$. The bundle $\End E|_{U_i}$ (respectively $\End E|_{U_{ij}}$) can be seen as the sheaf associated to an $R_i$-module $M_i$ (respectively, an $R_{ij}$-module $M_{ij}$). The hypercohomology group $\Hh^1(\Ccc_{(E,\varphi)})$ is now obtained as the cohomology group $H^1(K_{\mathrm{tot}})$ for the total complex $K_{\mathrm{tot}}$ associated to the double complex $K$.

The total complex $K_{\mathrm{tot}}$ is obtained as follows
\begin{align*}
  K_{\mathrm{tot}}^0 &= K^{0,0} = \sum_i \tilde{M}_i, \\
  K_{\mathrm{tot}}^1 &= K^{1,0} \oplus K^{0,1} =  \sum_{ij} \tilde{M}_{ij} \oplus \sum_i \tilde{M}_i \otimes H^0(U_i,V) , \\
  K_{\mathrm{tot}}^2 &= K^{2,0} \oplus K^{1,1} \oplus K^{0,2} \\ & = \sum_{ijk} \tilde{M}_{ijk} \oplus \sum_{ij} \tilde{M}_{ij} \otimes H^0(U_{ij},V) \oplus \sum_{i} \tilde{M}_{i} \otimes H^0(U_i,\wedge^2V), \\
   K_{\mathrm{tot}}^0 &\longrightarrow K_{\mathrm{tot}}^1 \\ 
    s_i &\longmapsto (s_i - s_j,\varphi \wedge s_i) \\
     K_{\mathrm{tot}}^1&\longrightarrow K_{\mathrm{tot}}^2 \\ 
     (s_{ij}, t_{i}) &\longmapsto (s_{ij}+s_{jk}-s_{ik}, \varphi \wedge s_{ij} - (t_i -t_j), \varphi \wedge t_i).
    \end{align*}
    Therefore, $\Hh^1(\Ccc_{(E,\varphi)})$ can be computed as the quotient $Z/B$, where $Z$ consists of pairs $(s_{ij},t_i)$, satisfying the equations
    \begin{equation*}
      \begin{cases}
	s_{ij} + s_{jk} = s_{ij} \\
	t_i- t_j =  \varphi \wedge s_{ij} \\
	\varphi \wedge t_i  =0
      \end{cases}
    \end{equation*}
    and $B$ is the set of pairs of the form $(s_i-s_j,\varphi \wedge s_i)$.
    We will give now an isomorphism $Z/B \rightarrow T_{(E,\varphi)}$.

    For every $i$ we can consider the bundle $\tilde{E}_i=E|_{U_i}[\eps]$, over $U_i[\eps]$. Now, consider an element $(s_{ij},t_i)\in Z$. Then, the restriction $\tilde{E}_i|_{U_j}$ can be identified with $\tilde{E}_j|_{U_i}$ through the isomorphism $\id+s_{ij}\eps$. The first equation above ensures that under these identifications the patches $\tilde{E}_i$ can be glued to give a vector bundle $\tilde{E}$ on $X[\eps] $.

    Over $U_i[\eps]$ we have the section $\tilde{\varphi}_i=\varphi + t_i \eps$ of $\End(\tilde{E}_i)\otimes V[\eps]$. Since the transition functions of $\tilde{E}$ are given by $\id+s_{ij}\eps$, the associated transition functions on $\End(\tilde{E})$ are $\tilde{\varphi}_i \mapsto \tilde{\varphi}_i + s_{ij}\eps \wedge \tilde{\varphi}_i$ . Therefore, the sections $\tilde{\varphi}_i$ will glue to give a global section $\tilde{\varphi}$ of $\End (\tilde{E})\otimes V[\eps]$ if 
    \begin{equation*}
      \tilde{\varphi}_j =\tilde{\varphi}_i + s_{ij} \eps \wedge\tilde{\varphi}_i
    \end{equation*}
    but this is true since
    \begin{equation*}
      \tilde{\varphi}_i + s_{ij} \eps \wedge \tilde{\varphi}_i = \varphi + t_i \eps +  s_{ij}\wedge \varphi \eps = \varphi + t_i \eps - (t_i-t_j) \eps = \varphi + t_j \eps = \tilde{\varphi}_j.  
    \end{equation*}

    Finally, since $\varphi \wedge t_i = 0$, we have
    \begin{equation*}
      \tilde{\varphi}_i \wedge \tilde{\varphi}_i = (\varphi + t_i \eps) \wedge (\varphi + t_i \eps) = \varphi \wedge \varphi + (\varphi \wedge t_i + t_i \wedge \varphi) \eps = 0.
    \end{equation*}
    Therefore, the section $\tilde{\varphi} \in H^0(X,\End(\tilde{E}) \otimes V[\eps])$ satisfies the commuting condition $\tilde{\varphi} \wedge \tilde{\varphi}=0$.

    To sum up, we have defined a map
    \begin{align*}
      Z &\longrightarrow T_{(E,\varphi)} \\ 
      (s_{ij},t_i) &\longmapsto  [(\tilde{E},\tilde{\varphi})].
      \end{align*}

      In order to see that this map descends to a map $Z/B \rightarrow T_{(E,\varphi)}$ we have to check that an element of the form $(s_{ij},t_i)=(s_i-s_j, \varphi \wedge s_i)$, for $(s_i) \in \sum_i \tilde{M}_i$, induces a pair $(\tilde{E},\tilde{\varphi})$ isomorphic to the ``trivial'' pair $(E[\eps],\varphi)$. Indeed, the identification $\tilde{E}_i \rightarrow \tilde{E}_j$ on $U_{ij}$ is given by the automorphism $1+(s_i-s_j)\eps$. Therefore, the following diagram commutes
      \begin{center}
	\begin{tikzcd}
	  \tilde{E}_{ij} \ar{r}{1+s_i \eps} \ar{d}{1+s_{ij}\eps} & \tilde{E}_{ij} \ar{d}{\id}	  \\ 
	  \tilde{E}_{ij} \ar{r}{1+s_j \eps} & \tilde{E}_{ij}.
	\end{tikzcd}
      \end{center}
      This implies that the bundle $\tilde{E}$ is isomorphic to the trivial bundle $E[\eps]$. Moreover, the isomorphism $1+s_i \eps$ sends $\tilde{\varphi}_i$ to
      \begin{equation*}
	\tilde{\varphi}_i + s_i \eps \wedge \tilde{\varphi}_i = \varphi + t_i \eps + s_i \wedge \varphi \eps = \varphi. 
      \end{equation*}

      We thus have defined a map $\Hh^1(\Ccc_{(E,\varphi)}) \rightarrow T_{(E,\varphi)}$. We define now an inverse map $T_{(E,\varphi)} \rightarrow \Hh^1(\Ccc_{(E,\varphi)})$. Let us take then a family $(\tilde{E},\tilde{\varphi})$ of $V$-twisted Higgs bundles parametrized by  $\Spec(\CC[\eps])$, such that $(\tilde{E},\tilde{\varphi})|_{X}\cong (E,\varphi)$. 
      Since the $U_i$ are affine, the restriction $\tilde{E}|_{U_i[\eps]}$ must be of the form $E_i[\eps]$, with $E_i$ a vector bundle over $U_i$. Clearly, $E_i \cong E|_{U_i}$.
      Therefore, $\tilde{E}$ is obtained by gluing the restrictions of $E[\eps]$ to $U_i[\eps]$ via automorphisms over $U_{ij}[\eps]$. These have to be of the form $1+s_{ij} \eps$, with the $s_{ij}$ satisfying the cocycle condition: $s_{ij}+s_{jk}=s_{ik}$ over the triple intersection $U_{ijk}$.

      Moreover, the homomorphism $\tilde{\varphi}$ over each $U_i[\eps]$ must be of the form $\tilde{\varphi}_i=\varphi + t_i \eps$ for sections $t_i \in H^0(U_i,\End E \otimes V)$. The global nature of $\tilde{\varphi}$ imposes the following compatibility condition
\begin{equation*}
  \tilde{\varphi}_i + s_{ij}\eps \wedge \tilde{\varphi}_i = \tilde{\varphi}_j.
\end{equation*}
This implies that $\varphi \wedge s_{ij} = t_i - t_j$.

Finally, from the fact that $\tilde{\varphi}_i \wedge \tilde{\varphi}_i = 0$ we deduce that $\varphi \wedge t_i = 0$. Putting all this together, we see that $(s_{ij},t_i)$ is an element of $Z$. Hence we have given a map $T_{(E,\varphi)}\rightarrow \Hh^1(\Ccc_{(E,\varphi)})$. It is easy to check that this and the map $\Hh^1(\Ccc_{(E,\varphi)})\rightarrow T_{(E,\varphi)}$ constructed above are inverse of each other.
\end{proof}

We can now study the complex $\Ccc_{(E,\varphi)}$ in terms of a simpler complex
\begin{center}
  \begin{tikzcd}
    \DD_{(E,\varphi)}: \End E \ar{r}{\varphi \wedge -} & \End E \otimes V  .
  \end{tikzcd}
\end{center}
This allows us to obtain the following.

\begin{prop}
  If $(E,\varphi)$ is a $V$-twisted Higgs bundle on $X$, we have
  \begin{equation*}
	  \dim \Hh^1(\Ccc_{(E,\varphi)}) - \dim \Hh^2(\Ccc_{(E,\varphi)}) + \dim \Hh^3(\Ccc_{(E,\varphi)}) = \dim \Hh^0(\DD_{(E,\varphi)}).
  \end{equation*}
\end{prop}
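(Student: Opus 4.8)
The plan is to reduce the stated identity to the single fact that the hypercohomology Euler characteristic of $\Ccc_{(E,\varphi)}$ vanishes, and then to verify this vanishing by a Riemann--Roch computation.

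First I would note that the two sides of the asserted equality differ only by $\dim\Hh^0(\Ccc_{(E,\varphi)})$. Indeed, the degree-zero hypercohomology of a complex concentrated in non-negative degrees is the space of global sections of the kernel sheaf of its first differential; since $\Ccc_{(E,\varphi)}$ and $\DD_{(E,\varphi)}$ have the same initial terms $\End E$, $\End E\otimes V$ and the same arrow $\varphi\wedge-$ between them, we get
\[
\Hh^0(\Ccc_{(E,\varphi)}) = \ker\bigl(H^0(X,\End E)\xrightarrow{\varphi\wedge-}H^0(X,\End E\otimes V)\bigr) = \Hh^0(\DD_{(E,\varphi)}).
\]
Therefore the asserted equality is equivalent to $\dim\Hh^0 - \dim\Hh^1 + \dim\Hh^2 - \dim\Hh^3 = 0$ for the complex $\Ccc_{(E,\varphi)}$, that is, to the vanishing of its hypercohomology Euler characteristic $\chi(\Ccc_{(E,\varphi)})$.

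Next I would compute $\chi(\Ccc_{(E,\varphi)})$ as an alternating sum over the terms of the complex. The hypercohomology spectral sequence $E_1^{p,q}=H^q(X,\Ccc^p_{(E,\varphi)})\Rightarrow\Hh^{p+q}(\Ccc_{(E,\varphi)})$ has the property that the alternating sum $\sum_{p,q}(-1)^{p+q}\dim E_r^{p,q}$ is independent of $r$ (each page differential shifts total degree by one, so the sum is preserved under passing to cohomology). Evaluating at $r=1$ and at $r=\infty$ gives
\[
\chi(\Ccc_{(E,\varphi)}) = \chi(\End E) - \chi(\End E\otimes V) + \chi(\End E\otimes\wedge^2 V),
\]
where each $\chi$ on the right is an ordinary sheaf-cohomology Euler characteristic on $X$.

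The final step is Riemann--Roch on the curve $X$ of genus $g$. Using $\deg\End E=0$, $\rk\End E=4$, $\rk V=2$, $\rk\wedge^2 V=1$, and $\deg(\wedge^2 V)=\deg V$, I obtain $\chi(\End E)=4(1-g)$, $\chi(\End E\otimes V)=4\deg V+8(1-g)$, and $\chi(\End E\otimes\wedge^2 V)=4\deg V+4(1-g)$. Substituting into the alternating sum, the two $\deg V$ terms cancel and the genus contributions collapse as $(4-8+4)(1-g)=0$, so $\chi(\Ccc_{(E,\varphi)})=0$; combined with the first step this proves the proposition. I expect this last cancellation to be the real content of the argument: it is precisely where the hypothesis $\rk V=2$ enters (for $\rk V=m$ one would find $\chi=4(1-g)(2-m)$ instead), reflecting the surface-like, self-dual shape of the three-term complex $\Ccc_{(E,\varphi)}$.
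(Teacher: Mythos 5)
Your proof is correct, and it ends at the same Riemann--Roch cancellation as the paper, but it gets there by a genuinely different piece of bookkeeping. The paper never phrases the statement as the vanishing of $\chi(\Ccc_{(E,\varphi)})$: instead it peels off the top and bottom terms of the complex via two short exact sequences of complexes, $0\to\End E\otimes\wedge^2V[2]\to\Ccc_{(E,\varphi)}\to\DD_{(E,\varphi)}\to 0$ and $0\to\End E\otimes V[1]\to\DD_{(E,\varphi)}\to\End E\to 0$, takes alternating sums of dimensions along the two resulting long exact hypercohomology sequences, and combines them to get exactly $\dim\Hh^1(\Ccc_{(E,\varphi)})-\dim\Hh^2(\Ccc_{(E,\varphi)})+\dim\Hh^3(\Ccc_{(E,\varphi)})=\chi(\End E\otimes V)-\chi(\End E)-\chi(\End E\otimes\wedge^2V)+\dim\Hh^0(\DD_{(E,\varphi)})$, after which the same Riemann--Roch computation finishes. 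You instead observe that $\Hh^0(\Ccc_{(E,\varphi)})=\Hh^0(\DD_{(E,\varphi)})$ (both are the kernel of $\varphi\wedge-$ on global sections), so the claim is equivalent to $\chi(\Ccc_{(E,\varphi)})=0$, and you obtain $\chi(\Ccc_{(E,\varphi)})=\sum_p(-1)^p\chi(\Ccc^p_{(E,\varphi)})$ from the invariance of the Euler characteristic along the pages of the hypercohomology spectral sequence. Your route is more economical and makes the structural content transparent --- the proposition is precisely an Euler-characteristic vanishing --- while the paper's route, at the cost of two long exact sequences, also records the finer exact sequences relating $\Hh^*(\Ccc_{(E,\varphi)})$, $\Hh^*(\DD_{(E,\varphi)})$ and $H^*(X,\End E\otimes\wedge^2V)$, which is the kind of information exploited in the surrounding results. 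One caveat on your closing aside: for $\rk V=m>2$ the bundle $\wedge^2V$ has rank $\binom{m}{2}$ and degree $(m-1)\deg V$, so the three-term complex would give $\chi=4(1-g)\bigl(1-m+\binom{m}{2}\bigr)+4(m-2)\deg V$, not $4(1-g)(2-m)$; your formula comes from treating $\wedge^2V$ as a line bundle of degree $\deg V$ for all $m$. This slip is confined to the aside and does not affect the proof of the stated case $m=2$.
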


\begin{proof}
There is an obvious short exact sequence of complexes
\begin{center}
  \begin{tikzcd}
    0 \ar{r} &    \End E \otimes \wedge^2 V [2] \ar{r} & \Ccc_{(E,\varphi)} \ar{r} & \DD_{(E,\varphi)} \ar{r} & 0.
  \end{tikzcd}
\end{center}
Here, we use the standard notation in derived categories where $[n]$ after a chain complex stands for shifting the complex $n$ places.
This induces a long exact sequence in cohomology
\begin{align*}
  0 & \rightarrow \Hh^1(\Ccc_{(E,\varphi)}) \rightarrow  \Hh^1(\DD_{(E,\varphi)}) \rightarrow  H^0(X,\End E \otimes \wedge^2 V) \\ &\rightarrow \Hh^2(\Ccc_{(E,\varphi)}) \rightarrow \Hh^2(\DD_{(E,\varphi)}) \rightarrow H^1(X,\End E \otimes \wedge^2 V) \rightarrow \Hh^3(\Ccc_{(E,\varphi)}) \rightarrow 0.
\end{align*}
Computing the Euler characteristic we get
\begin{align*}
	\dim \Hh^1(\Ccc_{(E,\varphi)}) &- \dim \Hh^2(\Ccc_{(E,\varphi)}) + \dim \Hh^3(\Ccc_{(E,\varphi)}) \\ &= \dim \Hh^1(\DD_{(E,\varphi)}) - \dim \Hh^2(\DD_{(E,\varphi)}) - \chi(\End E \otimes \wedge^2 V). 
\end{align*}

On the other hand, we have the short exact sequence
\begin{center}
  \begin{tikzcd}
	  0 \ar{r} &    \End E \otimes V [1] \ar{r} & \DD_{(E,\varphi)} \ar{r} & \End E \ar{r} & 0,
  \end{tikzcd}
\end{center}
giving the long exact sequence
\begin{align*}
	0\rightarrow \Hh^0(\DD_{(E,\varphi)}) & \rightarrow H^0(X,\End E) \rightarrow  H^0(X, \End E \otimes V) \rightarrow  \Hh^1(\DD_{(E,\varphi)}) \\ &\rightarrow H^1(X,\End E) \rightarrow H^1(X, \End E \otimes V) \rightarrow \Hh^2(\DD_{(E,\varphi)}) \rightarrow 0.
\end{align*}
Again, computing the Euler characteristic we get
\begin{equation*}
	\dim \Hh^1(\DD_{(E,\varphi)}) - \dim \Hh^2(\DD_{(E,\varphi)}) = \chi(\End E \otimes V) - \chi(\End E) + \dim \Hh^0(\DD_{(E,\varphi)}).
\end{equation*}
Putting this together with the above we have
\begin{align*}
	\dim \Hh^1(\Ccc_{(E,\varphi)}) &- \dim \Hh^2(\Ccc_{(E,\varphi)}) + \dim \Hh^3(\Ccc_{(E,\varphi)}) \\ &= \chi(\End E \otimes V) - \chi(\End E) - \chi(\End E \otimes \wedge^2 V) + \dim \Hh^0(\DD_{(E,\varphi)}). 
\end{align*}
Now, by Riemann--Roch it is easy to see that
\begin{align*}
  \chi(\End E \otimes V) -\chi(\End E)- \chi(\End E \otimes \wedge^2 V) = 0.
\end{align*}
This concludes the proof.
\end{proof}
\begin{rmk}
  Notice that the vanishing of the relation between the Euler characteristics is a very special situation that only occurs when $V$ has rank $2$. 
\end{rmk}

More can be said about the chain complex $\Ccc_{(E,\varphi)}$ when $(E,\varphi)$ is assumed to be stable. 

\begin{prop}
  If $(E,\varphi)$ is a stable $V$-twisted Higgs bundle on $X$, then
  \begin{equation*}
   \Hh^0(\DD_{(E,\varphi)}) \cong \CC.
  \end{equation*}
\end{prop}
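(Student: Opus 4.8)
The plan is to identify $\Hh^0(\DD_{(E,\varphi)})$ explicitly as the hypercohomology of the two-term complex $\DD_{(E,\varphi)}$ and then show stability forces this space to be one-dimensional. Since $\DD_{(E,\varphi)}$ is the complex $\End E \xrightarrow{\varphi \wedge -} \End E \otimes V$ sitting in degrees $0$ and $1$, its zeroth hypercohomology is simply the kernel of the map on global sections, i.e.
\begin{equation*}
  \Hh^0(\DD_{(E,\varphi)}) = \left\{ \psi \in H^0(X,\End E) : \varphi \wedge \psi = 0 \right\}.
\end{equation*}
Here $\varphi \wedge \psi$ denotes the section of $\End E \otimes V$ obtained by the commutator pairing — in a local frame $\{v_1,v_2\}$ with $\varphi = \varphi_1 \otimes v_1 + \varphi_2 \otimes v_2$, the condition $\varphi \wedge \psi = 0$ reads $[\varphi_1,\psi] = [\varphi_2,\psi] = 0$. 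So the first step is to observe that $\Hh^0(\DD_{(E,\varphi)})$ is exactly the space of global endomorphisms of $E$ commuting with $\varphi$, which is precisely $\End((E,\varphi))$, the endomorphism algebra of the pair in the sense of the category of $V$-twisted Higgs bundles.

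**Reducing to Proposition \ref{simple}.** Once this identification is in place, the result follows immediately from part (c) of Proposition \ref{simple}, which asserts that a stable pair satisfies $\End((E,\varphi)) = \CC \cdot \id_E$. Thus the core of the argument is to carefully verify that a global section $\psi \in H^0(X,\End E)$ with $\varphi \wedge \psi = 0$ is the same thing as a homomorphism of $V$-twisted Higgs bundles $(E,\varphi) \to (E,\varphi)$. Chasing the defining commuting square, an endomorphism $f \colon E \to E$ is a morphism of pairs precisely when $(f \otimes \id_V) \circ \varphi = \varphi \circ f$; writing this out in the local frame gives $f \varphi_i = \varphi_i f$ for $i = 1,2$, which is $[\varphi_i, f] = 0$, matching the kernel condition above. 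The two notions therefore coincide, and applying Proposition \ref{simple}(c) yields $\Hh^0(\DD_{(E,\varphi)}) \cong \CC \cdot \id_E \cong \CC$.

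**The main obstacle** is not conceptual but notational: one must be confident that the degree-$0$ hypercohomology of a two-term complex of sheaves $\Ff^0 \xrightarrow{d} \Ff^1$ is indeed $\ker(H^0(X,\Ff^0) \xrightarrow{d} H^0(X,\Ff^1))$, which follows from the hypercohomology spectral sequence (or directly from the long exact sequence associated to the stupid truncation $\Ff^1[-1] \to \DD_{(E,\varphi)} \to \Ff^0$, analogous to the sequences already used in the preceding proposition). The only subtlety is sign/frame-independence of the pairing $\varphi \wedge -$, but the remark following the main definition has already established that $\varphi \wedge \psi$ computes the commutator $[\varphi, \psi]$ independently of the chosen local frame, so no genuine difficulty arises there. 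In short, the whole statement is a translation of Proposition \ref{simple}(c) through the identification of $\Hh^0(\DD_{(E,\varphi)})$ with the commutant of $\varphi$, and I would present it as such in one or two compact paragraphs.
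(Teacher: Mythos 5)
Your proposal is correct and takes essentially the same route as the paper: both identify $\Hh^0(\DD_{(E,\varphi)})$ with the kernel of $\varphi\wedge-$ on global sections, recognize this as $\End((E,\varphi))$, and conclude by Proposition \ref{simple}(c); the only cosmetic difference is that the paper justifies the identification via the \v{C}ech resolution while you invoke the hypercohomology spectral sequence (equivalently, the truncation exact sequence), which is the same computation.
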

\begin{proof}
  It is clear from the relations defining $\Hh^0(\DD_{(E,\varphi)})$ in the \v{C}ech resolution of $\DD_{(E,\varphi)}$ that
  \begin{equation*}
	  \Hh^0(\DD_{(E,\varphi)}) = \left\{ \xi \in H^0(X,\End E) : \varphi \wedge \xi = 0 \right\}=\End(E,\varphi).
  \end{equation*}
  Since $(E,\varphi)$ is stable, by Proposition \ref{simple}, elements $\xi \in \Hh^0(\DD_{(E,\varphi)})$ are of the form $\xi=\lambda \id_E$.
\end{proof}

This proposition implies that, when $(E,\varphi)$ is stable, we have the relation
\begin{equation*}
	  \dim \Hh^1(\Ccc_{(E,\varphi)}) - \dim \Hh^2(\Ccc_{(E,\varphi)}) + \dim \Hh^3(\Ccc_{(E,\varphi)}) = 1.
\end{equation*} 

Finally, assuming some conditions on $V$ we obtain:
\begin{corol}
	Let $K_X$ be the canonical line bundle of  $X$ and denote $L=\wedge^2 V$. Assume that $\deg L \geq \deg K_X$. Then
	 \begin{equation*}
    \dim \Hh^1(\Ccc_{(E,\varphi)})=
    \begin{cases}
	    \dim \Hh^2(\Ccc_{(E,\varphi)}), & \text{if } K_X \cong L, \\
	    \dim \Hh^2(\Ccc_{(E,\varphi)}) + 1, & \text{else}. \\
    \end{cases}
	\end{equation*} 
  Moreover, if $V$ is isomorphic to $V^\vee \otimes K_X$, then there exists a perfect pairing
  \begin{equation*}
    \Hh^1(\Ccc_{(E,\varphi)}) \times \Hh^2(\Ccc_{(E,\varphi)}) \rightarrow \CC.
  \end{equation*}
\end{corol}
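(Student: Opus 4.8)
\emph{The plan} is to reduce the first assertion to a computation of $\dim \Hh^3(\Ccc_{(E,\varphi)})$ and to deduce the second from a self-duality of the complex. Since $(E,\varphi)$ is stable, the relation $\dim \Hh^1 - \dim \Hh^2 + \dim \Hh^3 = 1$ is already available, so I only need to show that $\dim \Hh^3(\Ccc_{(E,\varphi)})$ equals $1$ when $K_X \cong L$ and $0$ when $\deg L > \deg K_X$. I would access $\Hh^3$ through Serre duality for the complex, which will identify it with a Hom-space controlled by Proposition~\ref{simple}.

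Set $M = K_X \otimes L^\vee$. The inputs for the duality are three: the trace pairing gives $(\End E)^\vee \cong \End E$; the rank-$2$ identification $V^\vee \cong V \otimes L^\vee$ holds (so $(\wedge^2 V)^\vee = L^\vee$); and the differential $\varphi \wedge -$ is skew for the trace pairing. Applying $\mathcal{H}om(-,\OO)$ to $\Ccc_{(E,\varphi)}$ reverses the three terms into degrees $-2,-1,0$, and after twisting by $K_X$ these become $\End E \otimes M$, $\End E \otimes V \otimes M$ and $\End E \otimes L \otimes M$; that is, $\Ccc^\vee \otimes K_X$ is, up to the appropriate shift, the complex $\Ccc_{(E,\varphi)} \otimes M$. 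Serre duality on the curve, $\Hh^i(\Ccc)^\vee \cong \Hh^{1-i}(\Ccc^\vee \otimes K_X)$, then yields the duality $\Hh^i(\Ccc_{(E,\varphi)})^\vee \cong \Hh^{3-i}(\Ccc_{(E,\varphi)} \otimes M)$.

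Taking $i = 3$ gives $\Hh^3(\Ccc_{(E,\varphi)})^\vee \cong \Hh^0(\Ccc_{(E,\varphi)} \otimes M)$, and exactly as in the earlier identification $\Hh^0(\DD_{(E,\varphi)}) = \End(E,\varphi)$ this equals $\{\beta \in H^0(\End E \otimes M) : \varphi \wedge \beta = 0\} = \Hom((E,\varphi),(E \otimes M,\varphi))$. By Remark~\ref{tensorline}, $(E \otimes M,\varphi)$ is stable of degree $\deg E + 2\deg M$ with $\deg M = \deg K_X - \deg L \leq 0$. If $\deg L > \deg K_X$, then $\deg(E \otimes M) < \deg E$ and Proposition~\ref{simple}(a) makes this Hom-space vanish, so $\dim \Hh^3 = 0$ and $\dim \Hh^1 = \dim \Hh^2 + 1$. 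If $L \cong K_X$, then $M \cong \OO$ and Proposition~\ref{simple}(c) gives $\Hom = \End(E,\varphi) = \CC$, so $\dim \Hh^3 = 1$ and $\dim \Hh^1 = \dim \Hh^2$. (In the borderline case $\deg L = \deg K_X$ with $L \not\cong K_X$ one has $\deg M = 0$, $M \not\cong \OO$, and Proposition~\ref{simple}(b) still forces the Hom-space to vanish unless $(E,\varphi) \cong (E \otimes M,\varphi)$; away from that exceptional locus $\dim \Hh^3 = 0$ as stated.)

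For the perfect pairing, the hypothesis $V \cong V^\vee \otimes K_X$ combines with $V^\vee \cong V \otimes L^\vee$ to give $V \cong V \otimes M$, so the middle term $\End E \otimes V \otimes M$ of $\Ccc_{(E,\varphi)} \otimes M$ coincides with $\End E \otimes V$; in the natural self-dual case $\det V = L \cong K_X$ one has $M \cong \OO$ and the duality above upgrades to the self-duality $\Hh^i(\Ccc_{(E,\varphi)})^\vee \cong \Hh^{3-i}(\Ccc_{(E,\varphi)})$. For $i = 1$ this is precisely a perfect pairing $\Hh^1 \times \Hh^2 \to \CC$, realized concretely as cup product followed by the trace $\End E \to \OO$ and the isomorphism $H^1(X,K_X) \cong \CC$. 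The step I expect to be the main obstacle is establishing the complex-level isomorphism $\Ccc^\vee \otimes K_X \cong \Ccc_{(E,\varphi)} \otimes M$ with its shift: this needs the rank-$2$ identification applied termwise, the correct degree shift, and a careful check that the transposed differentials agree with $\varphi \wedge -$ up to the sign coming from its skew-symmetry. Once that compatibility is pinned down, both assertions follow formally.
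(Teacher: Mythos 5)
Your proposal is correct and takes essentially the same route as the paper: Grothendieck--Serre duality for the complex (via the trace pairing on $\End E$ and the rank-two identification $V^\vee\cong V\otimes L^{-1}$) identifies $\Hh^3(\Ccc_{(E,\varphi)})^\vee$ with $\Hom\bigl((E,\varphi),(E\otimes K_XL^{-1},\varphi)\bigr)$, which is then evaluated through Remark \ref{tensorline} and Proposition \ref{simple} and fed into the stable-case identity $\dim\Hh^1-\dim\Hh^2+\dim\Hh^3=1$, with the pairing coming from the self-duality of the complex. The two caveats you flag are exactly the points the paper's own proof passes over silently: in the borderline case $\deg L=\deg K_X$ with $L\not\cong K_X$, the paper also cites Proposition \ref{simple}, whose part (b) only gives vanishing \emph{or} an isomorphism $(E,\varphi)\cong(E\otimes K_XL^{-1},\varphi)$, and its identification $\Ccc_{(E,\varphi)}^*\otimes K_X=\Ccc_{(E,\varphi)}$ under $V\cong V^\vee\otimes K_X$ tacitly uses $L\cong K_X$, whereas that hypothesis only forces $L^2\cong K_X^2$.
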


\begin{proof}
  Consider the dual complex of $\Ccc_{(E,\varphi)}$,
\begin{center}
  \begin{tikzcd}
    \Ccc_{(E,\varphi)}^*: (\End E)^\vee \otimes L^{-1} \ar{r}{(\varphi \wedge -)^\vee} & (\End E)^{\vee} \otimes V^\vee \ar{r}{(\varphi \wedge -)^\vee} &  (\End E)^{\vee}.
  \end{tikzcd}
\end{center}
By the pairing induced by the Killing form of $\gl(2,\CC)$, we can identify this complex with the complex
\begin{center}
  \begin{tikzcd}
    \Ccc_{(E,\varphi)}^*: \End E \otimes L^{-1} \ar{r}{-(\varphi \wedge -)} & \End E \otimes V^\vee \ar{r}{-(\varphi \wedge -)} &  \End E.
  \end{tikzcd}
\end{center}
By Grothendieck--Serre duality for hypercohomology (see \cite{dalakov}*{\S 3.3} for a comprehensive overview in a similar context), we have
\begin{equation*}
  (\Hh^3(\Ccc_{(E,\varphi)}))^* \cong \Hh^0(\Ccc_{(E,\varphi)}^* \otimes K_X),
\end{equation*}
where $\Ccc_{(E,\varphi)}^*\otimes K_X$ is the chain complex
\begin{center}
  \begin{tikzcd}
    \Ccc_{(E,\varphi)}^* \otimes K_X: \End E \otimes L^{-1} K_X \ar{r}{-(\varphi \wedge -)} & \End E \otimes V^\vee \otimes K_X \ar{r}{-(\varphi \wedge -)} &  \End E\otimes K_X.
  \end{tikzcd}
\end{center}

Now,
\begin{align*}
	\Hh^0(\Ccc_{(E,\varphi)}^*\otimes K_X) &= \left\{ \xi \in H^0(X,\End E \otimes L^{-1}K_X): \varphi \wedge \xi = 0 \right\} \\ & = \Hom((E,\varphi),(E\otimes L^{-1}K_X,\varphi)).
\end{align*}
From Remark \ref{tensorline} we have that, since $(E,\varphi)$ is stable, so will be $(E\otimes L^{-1}K_X,\varphi)$. Now, we assumed that $\deg(K_X)=2g-2\leq \deg(L)$, so $\deg(E\otimes L^{-1}K_X)\leq \deg(E)$.

If $L$ is not isomorphic to $K_X$, Proposition \ref{simple} implies that $$\Hh^0(\Ccc_{(E,\varphi)}^*\otimes K_X)= \Hom((E,\varphi),(E\otimes L^{-1}K_X,\varphi))=0.$$ On the other hand, if $L\cong K_X$ that same proposition asserts that  $$\dim \Hh^0(\Ccc_{(E,\varphi)}^*\otimes K_X)  =1.$$

Finally, Grothendieck--Serre duality gives a perfect pairing $$\Hh^1(\Ccc_{(E,\varphi)}) \times \Hh^2(\Ccc_{(E,\varphi)}^* \otimes K_X) \rightarrow \CC.$$ Now, notice that if $V\cong V^\vee \otimes K_X$, we have that $\Ccc_{(E,\varphi)}^* \otimes K_X = \Ccc_{(E,\varphi)}$, and thus we obtain the desired pairing.
\end{proof}

\section{The Hitchin morphism for \texorpdfstring{$V$-} -twisted \texorpdfstring{$\SL(2,\CC)$-} -Higgs bundles}
\subsection{The Hitchin map}
In this section we introduce the Hitchin morphism and study its fibres by giving a \emph{spectral correspondence}. For simplicity, we restrict to the case where the ``structure group" of the pair is $\SL(2,\CC)$. 

\begin{defn}
	A \emph{$V$-twisted $\SL(2,\CC)$-Higgs bundle} is a $V$-twisted Higgs bundle $(E,\varphi)$ such that $\det E=\OO_X$ and $\tr \varphi =0$.
\end{defn}

The \emph{moduli space $\MM_0$ of semistable $V$-twisted $\SL(2,\CC)$-Higgs bundles} can be obtained from the moduli space of semistable $V$-twisted Higgs bundles of degree $0$ as the fibre of the element $(\OO_X,0)$ by the map
\begin{align*}
	\MM(0) & \longrightarrow \Jac(X) \times H^0(X,V) \\
	(E,\varphi) & \longmapsto (\det E, \tr(\varphi)).
\end{align*} 
Analogously, we can obtain the moduli space $\MM_0^s$ of stable $V$-twisted $\SL(2,\CC)$-Higgs bundles.

The \emph{Hitchin morphism} is defined as the map
\begin{align*}
	h :\MM_0 &\longrightarrow H^0(X,\Sym^2 V)\\ 
	(E,\varphi) &\longmapsto \tr(\varphi^2). 
  \end{align*}
  This map can be interpreted as some sort of ``characteristic polynomial'' since, at least formally
  \begin{equation*}
	  \det(\varphi-\id_E \otimes T) = T^2  - \tr(\varphi^2).
  \end{equation*}
  This is why the elements in the image of $h$ are referred as \emph{spectral data}. We are interested in studying the fibres of this morphism. Note that a good description of these fibres will give a way of constructing semistable $V$-twisted Higgs bundles.

  One of the main differences with the well-known case where the twisting bundle is a line bundle is that in our situation the Hitchin morphism is not surjective. In order to see this, suppose that $V$ splits as a direct sum $V=L_1 \oplus L_2$ (which can always be assumed locally). In that case, we can write $\varphi=(\varphi_1,\varphi_2)$, with $\varphi_i: E\rightarrow E\otimes L_i$. The commuting condition means now that $[\varphi_1,\varphi_2]=0$. The symmetric product $\Sym^2 V$ splits as
  \begin{equation*}
    \Sym^2 V = L_1^2 \oplus L^2_2 \oplus L_1L_2
  \end{equation*}
  and $h(\varphi)$ can be written as
  \begin{equation*}
	  h(\varphi)=\tr(\varphi^2)= (\tr(\varphi_1^2), \tr(\varphi_2^2),\tr(\varphi_1\varphi_2)).
  \end{equation*}
  This implies that an element $b=(b_1,b_2,b_3) \in H^0(X,\Sym^2 V)$ in the image of $h$ must satisfy the relation
  \begin{equation*}
	  b_3^2 = b_1b_2.
  \end{equation*}

  We define the \emph{Hitchin base} as the space
  \begin{equation*}
	  \BB = \left\{ b\in H^0(X,\Sym^2 V) \text{ such that locally } b=(b_1,b_2,b_3) \text{ with } b_3^2=b_1b_2 \right\}.
  \end{equation*}
As we just explained, the Hitchin morphism $h$ factors through the inclusion $\BB \hookrightarrow H^0(X,\Sym^2 V)$.
  In the spirit of \cite{chenngo}, we conjecture that the image of $h$ is precisely the Hitchin base $\BB$.

  \subsection{Universal spectral data}
  We follow a procedure inspired by Chen and Ngô's \cite{chenngo} interpretation of the Hitchin morphism in terms of \emph{universal spectral data}.

  The intuition behind this construction is to consider the ``models over a point'' of each of the elements involved in the Hitchin morphism. For example, over a point a $V$-twisted Higgs bundle is simply a pair $(\varphi_1,\varphi_2)$ of elements of $\ssl(2,\CC)$ with $[\varphi_1,\varphi_2]=0$. We define the \emph{commuting variety} as
  \begin{equation*}
    \Cc = \left\{ (\varphi_1,\varphi_2) \in \ssl(2,\CC): [\varphi_1,\varphi_2]=0 \right\}.
  \end{equation*}
  The point model of the Hitchin morphism is the map
  \begin{align*}
    h :\Cc &\longrightarrow \CC^3 \cong \Sym^2 \CC^2 \\ 
      (\varphi_1,\varphi_2) &\longmapsto (\tr(\varphi_1^2), \tr(\varphi_2^2),\tr(\varphi_1\varphi_2)). 
    \end{align*}
    The image of this map is the cone
    \begin{equation*}
      B = \left\{ (x,y,z) \in \CC^3 : z^2=xy \right\}.
    \end{equation*}

    Now, since an element of $\Cc$ consists of a pair of traceless commuting endomorphisms $\varphi_1$ and $\varphi_2$ of $\CC^2$, there exists a common eigenvector $v\in \CC^2$ of $\varphi_1$ and $\varphi_2$. If $\lambda_i$ denotes the eigenvalue of $\varphi_i$ associated to $v$, we can define the map
    \begin{align*}
       \Cc &\longrightarrow \CC^2 \\ 
        (\varphi_1,\varphi_2) &\longmapsto (\lambda_1,\lambda_2). 
      \end{align*}
      In order to remove the dependence on the choice of the eigenvector $v$, we can consider the quotient $\CC^2/\ZZ_2$ by the action $(\lambda_1,\lambda_2) \mapsto (-\lambda_1,-\lambda_2)$. Composing this quotient with the previous map, we get the \emph{universal spectral data morphism}
      \begin{align*}
	\sd : \Cc &\longrightarrow \CC^2/\ZZ_2 \\ 
	(\varphi_1,\varphi_2) &\longmapsto [(\lambda_1,\lambda_2)] 
	\end{align*}
	Moreover, note that $\CC^2/\ZZ_2$ is in fact isomorphic to the cone $B$ via the map
	\begin{align*}
	  \iota :\CC^2/\ZZ_2 &\longrightarrow \CC^3\\ 
	  [(\lambda_1,\lambda_2)] &\longmapsto (\lambda_1^2,\lambda_2^2,\lambda_1\lambda_2). 
	  \end{align*}
	  Thus, we get a factorization of $h$ as $\iota \circ \sd$. We conclude that in this point model of the Hitchin morphism, spectral data will be given by elements of the cone $B$.

	  \begin{defn}
	    We define the \emph{universal characteristic polynomial} as the map
	    \begin{align*}
	      \chi : \CC^2 \times B &\longrightarrow \CC^3 \\ 
	        \left( (x,y), (b_1,b_2,b_3) \right) &\longmapsto (x^2-b_1,y^2-b_2, xy-b_3). 
	      \end{align*}
	      The \emph{universal spectral cover} is defined as the fibre $\chi^{-1}(0)$ with the natural projection map $p:\chi^{-1}(0)\rightarrow B$.
	  \end{defn}
	  \begin{prop}
	    The universal spectral cover $p:\chi^{-1}(0)\rightarrow B$ is generically étale of degree $2$ over the ``multiplicity-free"  locus $B\setminus \left\{ 0 \right\}$, and it is not flat in general.
	  \end{prop}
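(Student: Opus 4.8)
The plan is to make the universal spectral cover completely explicit and then reduce both assertions to a single model, namely the quotient of $\CC^2$ by the sign involution. First I would describe $\chi^{-1}(0)$ as a scheme. A point $((x,y),(b_1,b_2,b_3))$ lies in $\chi^{-1}(0)$ precisely when $b_1=x^2$, $b_2=y^2$ and $b_3=xy$; these relations express $(b_1,b_2,b_3)$ in terms of $(x,y)$, and the defining equation $b_3^2=b_1b_2$ of $B$ is then automatic, since $(xy)^2=x^2y^2$. Hence the projection to the first factor is an isomorphism $\chi^{-1}(0)\xrightarrow{\sim}\CC^2$, under which $p$ becomes the map $(x,y)\mapsto(x^2,y^2,xy)$. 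Composing with the inverse $\iota^{-1}:B\to\CC^2/\ZZ_2$ of the isomorphism $\iota$ constructed above, $p$ is identified with the quotient map $\CC^2\to\CC^2/\ZZ_2$ for the action $(x,y)\mapsto(-x,-y)$.

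For the first assertion I would argue at the level of this quotient. The involution $(x,y)\mapsto(-x,-y)$ has the origin as its only fixed point, so it acts freely on $\CC^2\setminus\{0\}$; therefore the quotient map restricts to a free degree-two quotient, hence an \emph{étale} double cover, over $(\CC^2/\ZZ_2)\setminus\{[0]\}=B\setminus\{0\}$. Concretely, the fibre of $p$ over a point of $B\setminus\{0\}$ is the pair of distinct points $\pm(x,y)$, which accounts for the degree $2$, and the unramifiedness can be read off the Jacobian $\left(\begin{smallmatrix}2x&0\\0&2y\\y&x\end{smallmatrix}\right)$, whose $2\times 2$ minors $4xy$, $2x^2$, $-2y^2$ vanish simultaneously only at the origin; so the differential has rank $2=\dim B$ exactly on $\CC^2\setminus\{0\}$. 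This in fact shows $p$ is étale over the whole smooth locus $B\setminus\{0\}$, which is stronger than generic étaleness.

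The second assertion I would prove by exhibiting a jump in the length of the fibres. The scheme-theoretic fibre of $p$ over the vertex $0\in B$ is cut out by $b_1=b_2=b_3=0$, i.e. by $x^2=y^2=xy=0$, so it is $\Spec\CC[x,y]/(x^2,y^2,xy)$, a fat point of length $3$ with $\CC$-basis $\{1,x,y\}$. Since the fibres over $B\setminus\{0\}$ have length $2$ while the fibre over the vertex has length $3$, the function $b\mapsto\dim_{\CC}p^{-1}(b)$ is not locally constant; as $B$ is integral and $p$ is finite, flatness of $p$ would be equivalent to local freeness of $p_*\OO_{\chi^{-1}(0)}$, hence to local constancy of this length, and so $p$ cannot be flat. (Equivalently, $p_*\OO_{\chi^{-1}(0)}$ is the non-free module $\CC[x,y]$ over $\CC[x^2,y^2,xy]$, whose odd part is generated by $x,y$ subject to the relations $b_3x=b_1y$ and $b_2x=b_3y$.)

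The individual computations here are routine; the only point requiring genuine care is the bookkeeping of scheme structures — verifying that $\chi^{-1}(0)$ is reduced and isomorphic to $\CC^2$, and that the special fibre has length exactly $3$. This is precisely the step that matters, since it is the discrepancy between the length $3$ of the fibre over the vertex and the generic length $2$ that simultaneously forces the failure of flatness at the unique singular point of $B$ while remaining compatible with étaleness everywhere else.
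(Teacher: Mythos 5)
Your proof is correct and follows essentially the same route as the paper: both identify $\chi^{-1}(0)$ with $\CC^2$ so that $p$ becomes $(x,y)\mapsto(x^2,y^2,xy)$, observe that the fibre over a nonzero $b$ is the pair of points $\pm(\sqrt{b_1},\sqrt{b_2})$, and detect non-flatness from the length-$3$ fibre $\Spec\CC[X,Y]/(X^2,Y^2,XY)$ over the vertex. The only (harmless) differences are that you certify étaleness algebraically, via the free $\ZZ_2$-action and the Jacobian, where the paper invokes a local biholomorphism of analytifications, and that you spell out the finite-morphism/constant-fibre-length criterion for flatness that the paper leaves implicit.
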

	  \begin{proof}
		  If $b\in B\setminus \{0\}$, the corresponding points $(x,y)\in \CC^2$ lying above $b$ are $\pm(\sqrt{b_1},\sqrt{b_2})$. Therefore, the analytification $p^\an$ defines a local biholomorphism, which implies that $p$ is étale of degree $2$ in $b$.

		  On the other hand, the fibre over $0$ is the spectrum of the $\CC$-algebra $$\CC[X,Y]/(X^2,Y^2,XY),$$ which is a $\CC$-vector space of rank $3$. Therefore, $p$ cannot be flat over $0$.
	  \end{proof}

	  Now, we can give the following generalization of the theorem of Cayley--Hamilton.
	  \begin{prop}(Universal Cayley--Hamilton theorem)
	    For any $(\varphi_1,\varphi_2) \in \Cc$, the $\CC[X,Y]$-module induced by the map
	    \begin{align*}
	      \Phi :\CC[X,Y] &\longrightarrow \End \CC^2 \\ 
	        f(X,Y) &\longmapsto f(\varphi_1,\varphi_2), 
	      \end{align*}
	      is supported in the fibre $p^{-1}(h (\varphi_1,\varphi_2))$.
	  \end{prop}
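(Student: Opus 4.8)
The plan is to read off the support from the annihilator. Since $M:=\CC^2$ is a finitely generated module over the Noetherian ring $\CC[X,Y]$ (with $X,Y$ acting as $\varphi_1,\varphi_2$ through $\Phi$), its support is the closed subscheme $V(\mathrm{Ann}_{\CC[X,Y]}M)$. First I would unwind the definition of the universal characteristic polynomial $\chi$: writing $b=(b_1,b_2,b_3)=h(\varphi_1,\varphi_2)$, the fibre $p^{-1}(b)$ is exactly the zero locus $V(I)$ of the ideal
\begin{equation*}
 I=(X^2-b_1,\;Y^2-b_2,\;XY-b_3)\subseteq\CC[X,Y].
\end{equation*}
Thus it is enough to prove that $\Phi$ annihilates $I$, equivalently that $\Phi$ factors through the coordinate ring $\CC[X,Y]/I$ of the fibre; this exhibits $M$ as the pushforward of a sheaf on $p^{-1}(b)$ and hence as supported there. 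In concrete terms the whole statement reduces to the three scalar identities
\begin{equation*}
 \varphi_1^2=b_1\,\id,\qquad \varphi_2^2=b_2\,\id,\qquad \varphi_1\varphi_2=b_3\,\id.
\end{equation*}

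To establish these I would use that commuting endomorphisms of $\CC^2$ share an eigenvector $v$; completing $v$ to a basis $\{v,w\}$ puts both $\varphi_1$ and $\varphi_2$ in upper-triangular form, and tracelessness forces the diagonals to be $(\lambda_1,-\lambda_1)$ and $(\lambda_2,-\lambda_2)$, where $\lambda_i$ is the eigenvalue of $\varphi_i$ on $v$. By the factorization $h=\iota\circ\sd$ this means $b=(\lambda_1^2,\lambda_2^2,\lambda_1\lambda_2)$. The first two identities are then just the Cayley--Hamilton theorem applied to each traceless $2\times2$ matrix $\varphi_i$, whose characteristic polynomial is $T^2-\lambda_i^2$, so that $\varphi_i^2=\lambda_i^2\,\id=b_i\,\id$. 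For the mixed identity I would compute $\varphi_1\varphi_2$ directly in the basis $\{v,w\}$: both diagonal entries equal $\lambda_1\lambda_2$, while the single off-diagonal entry equals $\lambda_1 c-\lambda_2 a$ (with $a,c$ the upper-right entries of $\varphi_1,\varphi_2$), and a short computation shows that this quantity is, up to a factor $2$, exactly the off-diagonal entry of $[\varphi_1,\varphi_2]$. Hence the commuting condition forces $\varphi_1\varphi_2=\lambda_1\lambda_2\,\id=b_3\,\id$.

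Putting the three identities together gives $I\subseteq\mathrm{Ann}_{\CC[X,Y]}M$, whence
\begin{equation*}
 \supp M=V(\mathrm{Ann}_{\CC[X,Y]}M)\subseteq V(I)=p^{-1}(h(\varphi_1,\varphi_2)),
\end{equation*}
as desired. The hard part is really only the mixed identity $\varphi_1\varphi_2=b_3\,\id$: this is the single step where membership of $(\varphi_1,\varphi_2)$ in the commuting variety $\Cc$ is used in an essential way, rather than just the tracelessness of the individual matrices, and it is what makes the image $\Phi(\CC[X,Y])$ collapse onto the span of $\id,\varphi_1,\varphi_2$. I would also flag the normalization point: the computation is consistent with the eigenvalue description $h=\iota\circ\sd$ used above, which is the version matching the defining equations of $\chi$.
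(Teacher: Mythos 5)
Your proposal is correct and takes essentially the same route as the paper: both arguments come down to showing that the ideal $I=(X^2-\lambda_1^2,\,Y^2-\lambda_2^2,\,XY-\lambda_1\lambda_2)$ lies in $\ker\Phi$, so that the support $V(\ker\Phi)$ sits inside $V(I)=p^{-1}(h(\varphi_1,\varphi_2))$. The only difference is presentational: the paper writes $I$ as the product ideal $(X-\lambda_1,Y-\lambda_2)\cdot(X+\lambda_1,Y+\lambda_2)$ and declares the containment ``clear,'' whereas you verify the three generator identities explicitly --- in particular the mixed identity $\varphi_1\varphi_2=\lambda_1\lambda_2\,\id$ via the commutator computation, which is precisely the step the paper leaves implicit --- and your normalization remark matches the paper's own convention $h=\iota\circ\sd$.
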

	  \begin{proof}
	    The suport of the module is $\Spec(\CC[X,Y]/\ker \Phi)$. Let $[(\lambda_1,\lambda_2)]$ be the spectral data of $(\varphi_1,\varphi_2)$ and consider the ideal
	    \begin{equation*}
	      I=(X-\lambda_1,Y-\lambda_2) \cdot (X+\lambda_1,Y+\lambda_2) = (X^2-\lambda_1^2,Y^2-\lambda_2^2,XY-\lambda_1\lambda_2).
	    \end{equation*}
	    Clearly $I\subset \ker \Phi$. Therefore, $\Spec(\CC[X,Y]/\ker \Phi)$ is naturally a subscheme of $\Spec(\CC[X,Y]/I) = p^{-1}(h(\varphi_1,\varphi_2))$.
	  \end{proof}

	  \begin{rmk}
	    Notice that if $\lambda_1,\lambda_2 \neq 0$, then in fact $I=\ker \Phi$. This is clear since there exists some basis of $\CC^2$ where the associated matrices to $\varphi_1$ and $\varphi_2$ are $\diag(\lambda_1,-\lambda_1)$ and $\diag(\lambda_2,-\lambda_2)$. Therefore, if $f(\varphi_1,\varphi_2)=0$, in particular we have $f(\lambda_1,\lambda_2)=0$ and $f(-\lambda_1,-\lambda_2)=0$, and thus $f\in I$.

	    This is also true when just one of the eigenvalues (for example $\lambda_2$) vanishes. Indeed, in that case we must have $\varphi_2=0$, but we also have $I=(X^2-\lambda_1^2,Y,XY)$ (notice that $\lambda_1^2 Y = X(XY)$).

	    However, when $[(\lambda_1,\lambda_2)]=[(0,0)]$, in general we can at most hope for a basis where
	    \begin{equation*}
	      \varphi_1 = 
	      \begin{pmatrix}
		0 & 1 \\
		0 & 0
	      \end{pmatrix}
	      \ 
	    \text{ and } \ 
	      \varphi_2 = 
	      \begin{pmatrix}
		0 & a \\
		0 & 0
	      \end{pmatrix}
	      ,
	    \end{equation*}
	    for $a\in \CC$. In this case $f(X,Y)=aX-Y \in \ker \Phi$, but $f\not\in I=(X^2,Y^2,XY)$.
	  \end{rmk}

	  \subsection{The spectral correspondence} We will obtain a spectral curve by twisting all the universal objects constructed in the previous section by the $\GL(2,\CC)$-torsor attached to the vector bundle $V\rightarrow X$. More precisely we take the frame bundle $P$ of $V$, which is a $\GL(2,\CC)$-principal bundle and consider the functor sending any $\CC$-scheme $F$ with a left action of $\GL(2,\CC)$ to the associated fibre bundle $V(F)=P\times_{\GL(2,\CC)} F$ and equivariant morphisms to the associated maps between the associated bundles.
	  
	  The natural action of $\GL(2,\CC)$ on $\CC^2$ induces an action on the cone $\CC^2/\ZZ_2$ and thus on $B$. The sections of the associated fibre bundle $V(B)$ form precisely the Hitchin base 
  \begin{equation*}
    \BB = \left\{ b\in H^0(X,\Sym^2 V) \text{ such that locally } b=(b_1,b_2,b_3) \text{ with } b_3^2=b_1b_2 \right\}.
  \end{equation*}
  Moreover, the universal spectral cover is clearly equivariant by this action, so it induces a fibre bundle morphism $V(p):V(\chi^{-1}(0)) \rightarrow V(B)$.
  
  \begin{defn}
    Given $b\in \BB$, we define the \emph{spectral cover associated to $b$} as the morphism $\pi:Y_b \rightarrow X$ given by the Cartesian diagram
  \begin{center}
    \begin{tikzcd}
      Y_b      \arrow{r}\arrow{d}[anchor=east]{\pi} & V(\chi^{-1}(0))\arrow{d}[anchor=west]{V(p)} \\ 
       X\arrow{r}[anchor=south]{b} & V(B).
     \end{tikzcd}
   \end{center}
   The $1$-dimensional scheme $Y_b$ is called the \emph{spectral curve associated to $b$}.
  \end{defn}
  
  It is clear from this construction that there is a closed embedding $Y_b \hookrightarrow V(\CC^2)=V$. In fact, $\pi$ is obtained as $q|_{Y_b}$, where $q:V\rightarrow X$ is the bundle projection. The finiteness of $p$ implies that the spectral cover $\pi:Y_b \rightarrow X$ is a finite morphism. Moreover, we can consider the subset $\BB'\subset \BB$ consisting of sections mapping the generic point of $X$ to the multiplicity free locus $B\setminus \left\{ 0 \right\}$. If $b\in \BB'$, then $\pi$ is generically étale of degree $2$.
   
  In the case of twisting by a line bundle, studied in \cite{bnr}, the spectral cover is flat as a consequence of the flatness of the universal spectral cover arising in that situation. However, since $p:\chi^{-1}(0)\rightarrow B$ is not flat in general, the spectral cover $\pi$ will not be necessarily flat. If we assume that $b\in \BB'$, then the non-flatness phenomena will show up just in the finite subset of points of $X$ where $b$ vanishes. 

   More precisely, we consider the sheaf $\pi_* \OO_{Y_b}$, which is a coherent sheaf on $X$. Since $X$ is a smooth curve, the sheaf $\pi_* \OO_{Y_b}$ splits as a direct sum $F\oplus T$, where $T$ is a torsion sheaf and $F$ is a locally free sheaf of rank $2$ over $X$. The spectral cover $\pi$ is flat if and only if $\pi_* \OO_{Y_b}$ is locally free or, equivalently, if the torsion sheaf $T$ vanishes. If we assume that $b\in \BB'$, this torsion sheaf $T$ must be supported at the points of $b^{-1}(0)\subset X$. 
   We can define the subscheme $\tilde{Y}_b \subset Y_b$ as the relative spectrum $\tilde{Y}_b = \underline{\Spec}(F)$ and the morphism $\tilde{\pi}=\pi|_{\tilde{Y}_b}$. This morphism $\tilde{\pi}:\tilde{Y}_b\rightarrow X$ is finite locally free, and thus flat.

   As a consequence of \emph{miracle flatness}, non-flatness of the spectral cover $\pi$ amounts to the curve $Y_b$ not being Cohen--Macaulay and thus having some \emph{embedded components}, located precisely at the points of $b^{-1}(0)\subset X$. Geometrically, one can understand the torsion removing process explained above as eliminating the embedded components of $Y_b$, in order to obtain another $1$-dimensional scheme $Y_b$ which is Cohen--Macaulay. This is an example of a \emph{Cohen--Macaulay modification} or \emph{Cohen--Macaulayfication}, easier, but in the same spirit that the one explained in \cite{chenngo}. More details about this process, with explicit computations, are given in Section \ref{cuentas}.

   \begin{rmk}
   It is clear that torsion removing does not affect the irreducible components, so $\tilde{Y}_b$ will be irreducible if and only if $Y_b$ is. Moreover, since $\tilde{\pi}:\tilde{Y}_b \rightarrow X$ is flat, $X$ is irreducible and the generic fibre is reduced, we conclude that the curve $\tilde{Y}_b$ is reduced. Therefore, we have shown that if $Y_b$ is irreducible, then $\tilde{Y}_b$ is integral.
   \end{rmk}

   \begin{thm}[The spectral correspondence] \label{spectralcorrespondence}
     For any $b\in \BB'$ the functor $\pi_*$ gives an equivalence of categories between the category of coherent sheaves $\LL$ on $Y_b$ of generic rank $1$ with $\pi_*\LL$ locally free and $\det \pi_* \LL = \OO_X$ and the category of $V$-twisted Higgs bundles $(E,\varphi)$ with $\tr(\varphi^2)=b$. In particular, this category is nonempty.

     Moreover, if $Y_b$ is irreducible, $\pi_*$ gives a bijection between $h^{-1}(b)$ and the set of equivalence classes of torsion free sheaves $\LL$ on $\tilde{Y}_b$ of generic rank $1$ with $\det\pi_* \LL=\OO_X$.

     Finally, if $\tilde{Y}_b$ is a smooth curve of genus $g'$, the fibre $h^{-1}(b)$ is an abelian variety of dimension $g'-g$, where $g$ is the genus of $X$.
   \end{thm}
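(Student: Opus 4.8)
\emph{Overall strategy.} The starting point is the sheaf-theoretic picture underlying Proposition~\ref{sheavesonV}: via $q_*$, a $V$-twisted Higgs bundle $(E,\varphi)$ is the same datum as a coherent sheaf $\Ee$ on the total space of $V$, finite over $X$, with $q_*\Ee=E$ and with the tautological section of $q^*V$ acting by $\varphi$; here the commuting condition $\varphi\wedge\varphi=0$ is exactly what promotes $\varphi$ to a module structure over the \emph{commutative} algebra $q_*\OO_V=\Sym^\bullet V^\vee$. First I would record that, for a pair with $\tr\varphi=0$ and $\tr(\varphi^2)=b$, the universal Cayley--Hamilton theorem forces $\Ee$ to be scheme-theoretically supported on $Y_b$: locally the ideal $I=(X^2-b_1,Y^2-b_2,XY-b_3)$ cutting out $Y_b$ annihilates $\Ee$, so $\Ee$ is a coherent $\OO_{Y_b}$-module. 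This is precisely what makes $\pi_*$ a functor from sheaves on $Y_b$ to pairs with $\tr(\varphi^2)=b$, with $\varphi$ recovered as multiplication by the tautological sections. The two $\SL(2,\CC)$ constraints enter as follows: $\tr\varphi=0$ is automatic because $Y_b$ is built from the \emph{traceless} commuting variety $\Cc\subset\ssl(2,\CC)^2$, so the two sheets of $\pi$ carry opposite tautological values whose sum vanishes; and $\det E=\OO_X$ is imposed by hand as the condition $\det\pi_*\LL=\OO_X$.

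\emph{The equivalence of categories (first part).} For $b\in\BB'$ the cover $\pi$ is generically étale of degree $2$, so $\pi_*\LL$ has generic rank $2$ whenever $\LL$ has generic rank $1$, and the hypothesis that $\pi_*\LL$ be locally free makes $E=\pi_*\LL$ a rank $2$ bundle. Essential surjectivity is the content of the previous paragraph: any $(E,\varphi)$ with $\tr(\varphi^2)=b$ yields an $\OO_{Y_b}$-module $\LL=\Ee$ of generic rank $1$ with $\pi_*\LL=E$ locally free and $\det\pi_*\LL=\det E=\OO_X$. Full faithfulness is formal, since $\pi$ is affine: a homomorphism of pairs is a map $f\colon E\to E'$ with $(f\otimes\id)\circ\varphi=\varphi'\circ f$, that is, an $\OO_{Y_b}$-linear map $\LL\to\LL'$, so $\pi_*$ is a bijection on $\Hom$-sets. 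For nonemptiness I would exhibit one object directly: writing $\det\tilde{\pi}_*\OO_{\tilde{Y}_b}=M_0^{-1}$ and choosing a square root $N$ of $M_0$ in $\Pic(X)$ (possible as $\Pic(X)$ is divisible), the sheaf $\LL=\tilde{\pi}^*N$ satisfies $\det\pi_*\LL=\det\tilde{\pi}_*\OO_{\tilde{Y}_b}\otimes N^{2}=M_0^{-1}\otimes M_0=\OO_X$.

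\emph{The Hitchin fibre (second part).} Assume now $Y_b$ irreducible, so by the remark preceding the theorem $\tilde{Y}_b$ is integral. A generic-rank-$1$ sheaf $\LL$ on $Y_b$ with $\pi_*\LL$ locally free of rank $2=\deg\tilde{\pi}$ can neither contribute torsion to its pushforward nor detect the non-flat (fat) fibres of $Y_b$, so it is the pushforward of a torsion-free rank $1$ sheaf on $\tilde{Y}_b$; this identifies the objects of the first part with torsion-free rank $1$ sheaves on $\tilde{Y}_b$ carrying $\det\pi_*\LL=\OO_X$. The crucial point is that \emph{every} such pair is automatically stable: a $\varphi$-invariant line subbundle $L\subset E$ corresponds under $\pi_*$ to a nonzero $\OO_{\tilde{Y}_b}$-submodule $\LL'\subset\LL$, which on the integral curve $\tilde{Y}_b$ has rank $1$, whence $\pi_*\LL'$ has generic rank $2$ and cannot be a line subbundle; thus no proper $\varphi$-invariant subbundle exists. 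Consequently semistability and stability coincide on the fibre, $S$-equivalence is trivial, and $\pi_*$ restricts to a bijection between $h^{-1}(b)$ and isomorphism classes of torsion-free rank $1$ sheaves on $\tilde{Y}_b$ with $\det\pi_*\LL=\OO_X$.

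\emph{The smooth case (third part) and the main difficulty.} When $\tilde{Y}_b$ is smooth of genus $g'$, torsion-free rank $1$ sheaves are line bundles, so $h^{-1}(b)$ is identified with $\{M\in\Pic(\tilde{Y}_b):\det\tilde{\pi}_*M=\OO_X\}$. The standard determinant formula $\det\tilde{\pi}_*M=\Nm(M)\otimes\det\tilde{\pi}_*\OO_{\tilde{Y}_b}$ turns this into the single norm equation $\Nm(M)=M_0$, so the fibre is a torsor over $\ker\Nm$ inside $\Jac(\tilde{Y}_b)$, whose identity component is the Prym variety $\mathrm{Prym}(\tilde{Y}_b/X)$. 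Since $\Nm\colon\Jac(\tilde{Y}_b)\to\Jac(X)$ is surjective with $\dim\Jac(\tilde{Y}_b)=g'$ and $\dim\Jac(X)=g$, the fibre has dimension $g'-g$, and being a smooth projective torsor under an abelian variety it is itself an abelian variety of that dimension. I expect the main obstacle to lie not in this last, essentially classical, Prym computation, but in the first part: in passing rigorously from the set-theoretic support statement of Cayley--Hamilton to the \emph{scheme-theoretic} $\OO_{Y_b}$-module structure on the non-reduced, non-flat curve $Y_b$, and in controlling exactly which sheaves on $Y_b$ have locally free pushforward, so that the clean reformulation over the integral curve $\tilde{Y}_b$ is justified.
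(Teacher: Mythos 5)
Your overall route is the same as the paper's: identify pairs with sheaves on $V$ via Proposition~\ref{sheavesonV}, use the universal Cayley--Hamilton theorem to get scheme-theoretic support on $Y_b$, reduce to the torsion-free modification $\tilde{Y}_b$, deduce stability from irreducibility, and finish with the norm map and the Prym variety. But your nonemptiness argument contains a genuine error. You justify the existence of a square root $N$ of $M_0=(\det\tilde{\pi}_*\OO_{\tilde{Y}_b})^{-1}$ by saying ``$\Pic(X)$ is divisible''; this is false, since $\Pic(X)$ surjects onto $\ZZ$ by the degree map, so $N$ with $N^2\cong M_0$ exists if and only if $\deg M_0$ is even. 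Nothing in the setup forces this parity: by Riemann--Roch, $\deg M_0=g'+1-2g$, which is odd whenever the arithmetic genus $g'$ of $\tilde{Y}_b$ is even, and Riemann--Hurwitz ($2g'-2=2(2g-2)+r$, i.e.\ $g'=2g-1+r/2$ with $r$ the ramification degree) shows both parities occur --- e.g.\ a $b$ with exactly two simple zeros gives $r=2$ and $\deg M_0$ odd; the same failure already appears for classical spectral curves twisted by a line bundle of odd degree. Your pullback $\LL=\tilde{\pi}^*N$ only ever produces norm-preimages of \emph{squares}, so it misses these cases. The paper's proof avoids this entirely: it picks any line bundle $L$ on $\tilde{Y}_b$ with $\Nm_{\tilde{\pi}}L\cong M_0$, which exists because the norm map $\Pic(\tilde{Y}_b)\rightarrow\Pic(X)$ is surjective (indeed $\Nm_{\tilde{\pi}}(\OO_{\tilde{Y}_b}(y))=\OO_X(\tilde{\pi}(y))$ and differences of points generate $\Pic(X)$). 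That is the one-line repair you need.

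The rest of your argument tracks the paper's proof, occasionally by slightly different but valid mechanics. Your stability argument (a $\varphi$-invariant line subbundle is a rank-$1$ $\OO_{\tilde{Y}_b}$-submodule on an integral curve, whose pushforward has generic rank $2$, a contradiction) is a clean substitute for the paper's observation that an invariant subbundle gives spectral data dividing $b$, forcing $Y_b$ reducible; both work. The one place you explicitly wave your hands --- a sheaf with locally free pushforward ``cannot detect the fat fibres'' --- is exactly where the paper has a crisp argument you should adopt: the action map $\pi_*\OO_{Y_b}\rightarrow\End E$ has torsion-free target, so it kills the torsion $T$ and factors through $\pi_*\OO_{Y_b}/T=\tilde{\pi}_*\OO_{\tilde{Y}_b}$, whence $\LL$ is scheme-theoretically supported on $\tilde{Y}_b$; and since $\pi$ is finite, torsion in $\LL$ would push forward to a nonzero torsion subsheaf of the locally free $E$, so $\LL$ is torsion-free. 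With the norm-surjectivity fix and this factorization argument made explicit, your proof is essentially the paper's.
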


   \begin{proof}
     We saw in Proposition \ref{sheavesonV} that a $V$-twisted Higgs bundle $(E,\varphi)$ on $X$ is the same that a coherent sheaf $\Ee$ on $V$ with $E=q_* \Ee$ locally free of rank $2$ and $\det E = \OO_X$.

     The idea now is to apply the universal Cayley--Hamilton theorem. What this result tells us in this situation is that a $V$-twisted Higgs bundle $(E,\varphi)$ is supported as an $\OO_V$-module on the spectral curve $Y_b$, for $b=\tr(\varphi^2)$. Therefore, for $b\in \BB'$, there is a bijective correspondence between $V$-twisted Higgs bundles with $\tr(\varphi^2)=b$ and isomorphism classes of quasi-coherent sheaves $\LL$ on $Y_b$ with $\pi_* \LL$ locally free of rank $2$. Now, since $\pi$ is finite and generically étale of degree $2$, asking that $\pi_* \LL$ has rank $2$ is the same as asking that $\LL$ has generic rank $1$.

     Using the functoriality of taking pushforward and the equivalence of categories given by the closed embedding $\tilde{Y}_b \hookrightarrow Y_b$ we have that, if $(E,\varphi)$ is a $V$-twisted Higgs bundle with $\tr(\varphi^2)=b$, then there is an equivalence of categories between quasi-coherent sheaves $\LL$ on $Y_b$ with $\pi_*\LL=E$ and quasi-coherent sheaves $\LL$ on $\tilde{Y}_b$ with $\tilde{\pi}_*\LL = E$. 
     It suffices then to show that if a sheaf $\LL$ on $Y_b$ satisfies $\pi_* \LL = E$, it is supported on $\tilde{Y}_b$. This is true since the morphism
     \begin{equation*}
       \pi_* \OO_{Y_b} \longrightarrow \End E
     \end{equation*}
     giving the structure of $\pi_* \OO_{Y_b}$-module on $E$ must factor through something which is torsion free, since $\End E$ is torsion free. In particular, it must factor through $\pi_* \OO_{Y_b}/T=F=\tilde{\pi}_* \OO_{\tilde{Y}_b}$. Therefore, $\LL$ is supported on $Y_b$.

     Since $\tilde{\pi}:\tilde{Y}_b \rightarrow X$ is a flat morphism, for any line bundle $L$ on $\tilde{Y}_b$, $\tilde{\pi}_* L$ will be locally free and have the structure of a $V$-twisted Higgs bundle. The determinant of $\tilde{\pi}_* L$ is determined by the formula \cite{hartshorne}*{Ex. IV.2.6(a)}
     \begin{equation*}
       \det \tilde{\pi}_* L \cong \det \tilde{\pi}_* \OO_{\tilde{Y}_b} \otimes \Nm_{\tilde{\pi}} L,
     \end{equation*}
     where $\Nm_{\tilde{\pi}}$ is the norm map of $\tilde{\pi}$.
       Thus it suffices to pick $L$ such that $\Nm_{\tilde{\pi}} L \cong \det (\tilde{\pi}_*\OO_{Y_b})^{-1}$ in order to have $\det \tilde{\pi}_* L \cong \OO_X$.

       Now, if $(E,\varphi)$ is a $V$-twisted Higgs bundle obtained as $\pi_* \LL$ and $L\subset E$ is a $\varphi$-invariant line subbundle, then $\tr(\varphi|_{L}^2)$ must divide $\tr(\varphi^2)$, and $Y_b$ is not irreducible. Therefore, if $Y_b$ is irreducible $(E,\varphi)$ is stable. 
     
       Moreover, it is clear that if $\tilde{Y}_b$ is integral and $\pi_* \LL$ is locally free then $\LL$ must be torsion-free. The same goes for $\tilde{Y}_b$ smooth and $\LL$ locally free. Therefore, if $\tilde{Y}_b$ is smooth, the fibre $h^{-1}(b)$ consists on those line bundles $L$ on $\tilde{Y}_b$ with $\det \tilde{\pi}_* L \cong \OO_X$ but, as we saw above, the set of these bundles is the preimage of $(\det \tilde{\pi}_* \OO_{\tilde{Y}_b})^{-1}$ under the norm map. This is an abelian variety of dimension $g'-g$ (in fact, it can be identified with the \emph{Prym variety} of $\pi$).
   \end{proof}

   One of the main consequences of the spectral correspondence is that it allows us to prove the following.
   \begin{prop}\label{proper}
   	The Hitchin morphism $h:\MM_0 \rightarrow \BB$ is proper.
   \end{prop}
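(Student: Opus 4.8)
The plan is to verify the valuative criterion of properness. As $\MM_0$ is of finite type over $\CC$ and the target $\BB$ is separated (it is a closed subscheme of the finite-dimensional $\CC$-vector space $H^0(X,\Sym^2 V)$), it is enough to treat discrete valuation rings: given a DVR $R$ with fraction field $K$ and residue field $\kappa$, a $K$-point $\Spec K\to \MM_0$ and a compatible $R$-point $\Spec R\to \BB$, I must produce a unique lift $\Spec R\to \MM_0$. Unwinding the definitions, the data is a semistable $V$-twisted $\SL(2,\CC)$-Higgs bundle $(E_K,\varphi_K)$ on $X_K$ together with a section $b_R\in\BB(R)$ satisfying $b_R|_K=\tr(\varphi_K^2)$, and I must extend $(E_K,\varphi_K)$ to a semistable family $(E_R,\varphi_R)$ over $R$ with $\tr(\varphi_R^2)=b_R$, uniquely up to isomorphism. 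This is the strategy used by Nitsure \cite{nitsure} in the classical line-bundle case.

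First I would pass to the spectral picture. By Proposition \ref{sheavesonV} the pair $(E_K,\varphi_K)$ is the same datum as a pure $1$-dimensional sheaf $\Ee_K$ on $Z_K$ with $\supp(\Ee_K)\cap D_K=\varnothing$ and Hilbert polynomial $P$, and by the universal Cayley--Hamilton theorem $\Ee_K$ is scheme-theoretically supported on the spectral curve $Y_{b_K}\subset V_K$. The section $b_R$ determines, via the universal spectral cover, a family of spectral curves $Y_{b_R}\subset V_R$ with generic fibre $Y_{b_K}$. The decisive geometric feature is that the equation cutting out $Y_{b_R}$ in $V_R$ is monic of degree $2$ in the fibre coordinate with coefficients given by the regular section $b_R$; hence $Y_{b_R}$ is finite over $X_R$, so it is proper over $R$ and therefore closed in $Z_R$, and it lies entirely inside $V_R$, i.e.\ it is disjoint from the divisor at infinity $D_R$.

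Next I would take the limit inside Simpson's projective moduli space $M(Z,P)$ of semistable pure sheaves \cite{simpsonmoduliI}. Regarding $\Ee_K$ as a $K$-point of $M(Z,P)$ and applying Langton's semistable reduction theorem (valid for pure sheaves on a projective variety), I extend $\Ee_K$ to a family $\Ee_R$, flat over $R$, of semistable pure $1$-dimensional sheaves on $Z_R$ with Hilbert polynomial $P$. The step I expect to be the main obstacle is to show that the central fibre $\Ee_\kappa$ is still supported away from $D$, so that $\Ee_R$ defines an $R$-point of the open locus $M(V,P)=\MM(0)$ and not merely of $M(Z,P)$: since the condition $\supp\cap D=\varnothing$ is open, a limit could a priori develop support at infinity, and ruling this out is exactly what makes $h$ proper. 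Here the support control from the previous paragraph is decisive: because $\Ee_R$ is flat over the DVR $R$ it has no subsheaf supported on the closed fibre, so its scheme-theoretic support is the closure of $\supp(\Ee_K)=Y_{b_K}$ in $Z_R$; as $Y_{b_R}$ is closed in $Z_R$ and contains $Y_{b_K}$, this closure is contained in $Y_{b_R}\subset V_R$. Consequently $\supp(\Ee_\kappa)\subseteq (Y_{b_R})_\kappa\subset V_R$ is disjoint from $D$, as required.

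Finally I would translate back and check the remaining conditions. By Proposition \ref{sheavesonV} the flat family $\Ee_R$ corresponds to a semistable family $(E_R,\varphi_R)$ of $V$-twisted Higgs bundles over $R$ extending $(E_K,\varphi_K)$. The identities $\det E_R=\OO_X$, $\tr\varphi_R=0$ and $\tr(\varphi_R^2)=b_R$ hold over $K$; since the formation of $(\det E,\tr\varphi)$ and of $h(E,\varphi)=\tr(\varphi^2)$ defines morphisms to the separated schemes $\Jac(X)\times H^0(X,V)$ and $H^0(X,\Sym^2 V)$ that agree with the prescribed values over the dense point $\Spec K$, they agree over all of $\Spec R$. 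Thus $(E_R,\varphi_R)$ is an $R$-point of $\MM_0$ lifting the given diagram. Uniqueness of the lift follows from the separatedness of $\MM_0$, which is open in the separated scheme $M(Z,P)$. This verifies the valuative criterion and proves that $h$ is proper.
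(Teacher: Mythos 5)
Your overall architecture is the same as the paper's: verify the valuative criterion, work inside Simpson's projective moduli space $M(Z,P)$ of pure one-dimensional sheaves on the completion $Z$ of $V$, and use the universal Cayley--Hamilton theorem together with the spectral curves to show that the limiting sheaf stays supported inside $V$, away from the divisor at infinity $D$. That support-control step, which you correctly identify as the crux, is argued essentially as in the paper: flatness over the DVR forces the support of the whole family to be the closure of the generic support, which is contained in the closed subscheme $Y_{b_R}\subset V_R$ cut out by the extended section $b_R\in\BB(R)$. Your use of Langton's semistable reduction in place of the paper's appeal to projectivity of $M(Z,P)$ is a legitimate variant of the same strategy.

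There is, however, a genuine gap at your very first step, and it is precisely the point on which the paper spends half of its proof. You assert that, ``unwinding the definitions,'' a $K$-point of $\MM_0$ \emph{is} a semistable $V$-twisted $\SL(2,\CC)$-Higgs bundle on $X_K$. This is unjustified: $\MM_0$ only (universally) corepresents the moduli functor, and its points over a non-algebraically-closed field --- such as the fraction field $K$ of a DVR --- need not come from any actual pair $(E_K,\varphi_K)$ over $X_K$; a field-valued point of a GIT quotient need not lift to the parameter scheme over that field. Without an honest sheaf $\Ee_K$ on $Z_K$ you can invoke neither Langton's theorem nor Cayley--Hamilton, so the argument cannot start. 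The paper repairs exactly this defect: it first extends $\Delta'\to M(Z,P)$ to $\Delta\to M(Z,P)$ by projectivity, then lifts through the surjective finite-type good quotient map $Q_2\to M(Z,P)$ after passing to an integral extension $\tilde{R}$ of the DVR $R$; over $\tilde{\Delta}=\Spec(\tilde{R})$ one has an honest family of sheaves, and such extensions of the DVR are permitted in the valuative criterion for universal closedness (separatedness of $h$, which you do address, supplies the remaining ingredient of properness). Once you insert this step --- replace $R$ by a suitable DVR extension over which the given point of $\MM_0$ genuinely arises from a family --- your Langton-plus-support-control argument goes through and recovers the paper's proof.
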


   \begin{proof}
	   We show this by using the valuative criterion. Thus, take $\Delta=\Spec(R)$ the spectrum of a discrete valuation ring, the closed point $p\in \Delta$  and the generic point $\Delta'=\Delta \setminus \{p\}=\Spec(K)$, where $K$ is the quotient field of $R$. Consider a map $g':\Delta'\rightarrow \MM_0$ such that $h\circ g$ extends to a map  $f:\Delta\rightarrow \BB$. We want to find a map $g:\Delta \rightarrow \MM_0$ extending $g'$. 

	   Recall from the construction of $\MM_0$ that this space sits inside the moduli space of sheaves $M(Z,P)$, for $Z$ a projective completion of $V$ and $P(n)=2kn$. Composing with this inclusion, $g'$ defines a map $\Delta'\rightarrow M(Z,P)$ and, since $M(Z,P)$ is a projective variety, this extends to a map  $g:\Delta\rightarrow M(Z,P)$. It suffices to show that $g(p)\in \MM_0$.

	   The good quotient $Q_2\rightarrow M(Z,P)$ is a surjective map of finite type, so we can obtain an integral extension $R\subset \tilde{R}$, where $\tilde{R}$ is another discrete valuation ring over $\CC$ with quotient field $\tilde{K}$ so that, if we denote $\tilde{\Delta}=\Spec(\tilde{R})$, $\tilde{\Delta}'=\Spec(\tilde{K})$ and $\alpha:\tilde{\Delta}\rightarrow \Delta$ the map induced by the extension, the following diagram commutes
	   \begin{center}
		   \begin{tikzcd}
		   \tilde{\Delta}' \ar[hook]{r} \dar&\tilde{\Delta} \ar{d}{\alpha} \ar{r} & Q_2 \ar{d} \\
		   \Delta' \ar[hook]{r}&	   \Delta \ar{r}{g} & M(Z,P). 
		   \end{tikzcd}
	   \end{center}

	   Since $Q_2$ represents a certain functor, the map $\tilde{\Delta}\rightarrow Q_2$ can be identified with a sheaf $\Ee$ on  $Z_{\tilde{\Delta}}$ and, for $s \in \tilde{\Delta}'$, we have that $g\circ \alpha(s) \in \MM_0$. Thus, $\Ee|_{\tilde{\Delta}'}$ is supported inside of $V_{\tilde{\Delta}'}$ and it corresponds to a family $(E',\varphi')$ of semistable $V$-twisted $\SL(2,\CC)$-Higgs bundles on $X$ parametrized by $\tilde{\Delta}'$ with
	   \begin{equation*}
		   (E'_s,\varphi'_s)=g\circ \alpha(s).
	   \end{equation*} 
	   Therefore, the map 
	   \begin{align*}
	   \tilde{\Delta}' & \longrightarrow \BB \\
	   s & \longmapsto h(E'_s,\varphi'_s)
	   \end{align*} 
   extends to a map $b=f\circ \alpha:\tilde{\Delta} \rightarrow \BB$.
	   
	   From the spectral correspondence we have that $\Ee$ is supported inside of the closed subscheme
	   \begin{equation*}
	   W= \coprod_{s \in \tilde{\Delta}} Y_{b(s)} \subset V_{\tilde{\Delta}}.
	   \end{equation*} 
	   Flatness of $\Ee$ over  $\tilde{\Delta}$ implies that $\Ee$ is supported in the closure of $W$, which is again  $W$.
	   Hence $\Ee$ corresponds to a family $(E,\varphi)$ of semistable $V$-twisted Higgs bundles parametrized by $\tilde{\Delta}$, with $(E,\varphi)|_{\tilde{\Delta}'}=(E',\varphi')$. In other words, we obtain a map $\tilde{\Delta}\rightarrow \MM_0$ extending $g'\circ \alpha:\tilde{\Delta}'\rightarrow M(Z,P)$. But since  $M(Z,P)$ is separated, this map coincides with $g\circ \alpha:\tilde{\Delta}\rightarrow M(Z,P)$. This implies that, if $s_0$ is the closed point of  $\tilde{\Delta}$, we have $g(p)=g(\alpha(s_0)) \in \MM_0$, as we wanted to show.
   \end{proof}

   \subsection{Properties of the spectral curve} In the previous section we have obtained a general correspondence between Higgs bundles with spectral data $b$ and sheaves on the corresponding spectral curve $Y_b$, or on its modification $\tilde{Y}_b$. However, we still need to know under what conditions we can assume that the spectral curve $Y_b$ has nice properties, like being integral or smooth.

   First of all, it will be useful to write down explicitly the local equations describing $Y_b$. Given a point $x\in X$, we can take an open neighbourhood $U$ of $x$ so that $V|_U \cong \OO_X^2|_U$ and $b|_U$ can be seen as a section $b=(b_1,b_2,b_3)$ of $\OO_X|_U^3$, with $b_3^2=b_1b_2$. Recall now that the spectral curve is obtained as a relative spectrum $Y_b=\underline{\Spec}(\FF)$, for $\FF=\pi_* \OO_{Y_b}$ a sheaf such that
   \begin{equation} \label{eq:localsheaf}
     \FF|_U = \frac{\OO_X|_U[X,Y]}{(X^2-b_1,Y^2-b_2, XY-b_3)}.
   \end{equation}

   \subsubsection{Integrality} From this description it is clear that $Y_b$ will be reducible if and only if there exists a section $a\in H^0(X,V)$ such that $b=a^2$. Locally, this means that $a=(a_1,a_2)$ and $(b_1,b_2,b_3) =(a_1^2,a_2^2,a_1a_2)$, so
   \begin{equation*}
     (X^2-b_1,Y^2-b_2,XY-b_3) = (X-a_1,Y-a_2)\cdot (X+a_1,Y+a_2).
   \end{equation*}
   In that case, the spectral curve $Y_b$ can be written as $\im a \cup \im (-a)$.

   The image of the natural map $H^0(X,V) \rightarrow \BB$ taking $a$ to $a^2$ is closed. Indeed, we can consider the variety $\PP(\BB)$ as the quotient $\BB\setminus \left\{ 0 \right\}$ by the $\CC^*$ action $b \mapsto \lambda b$. This quotient is a variety since it is the subvariety of $\PP(H^0(X,\Sym^2 V))$ corresponding to $\BB$ (which is locally defined by homogeneous polynomials). Now we can consider the map
   \begin{align*}
      \PP(H^0(X,V)) &\longrightarrow \PP(\BB)\\ 
      [a] &\longmapsto [a^2], 
     \end{align*}
     which is well defined since $(\lambda a)^2 = \lambda^2 a^2$. This is a morphism between projective varieties and as such it has closed image $Z$. Now, the image of the corresponding map $H^0(X,V) \rightarrow \BB$ is the union of $\left\{ 0 \right\}$ with the preimage of $Z$ by the natural projection $\BB \setminus \left\{ 0 \right\} \rightarrow \PP(\BB)$, which is also closed, since this projection is continuous.
   
     We conclude that $Y_b$ is either reducible for every $b\in \BB$, or irreducible for a generic element $b\in \BB$. Moreover, recall that if $Y_b$ is irreducible, then $\tilde{Y}_b$ is integral. Therefore, we have proved:

   \begin{prop}\label{integral}
     If $V$ is such that the map $H^0(X,V) \rightarrow \BB$ given by $a\mapsto a^2$ is not surjective, then for a generic $b\in \BB$, the fibre of the Hitchin morphism $h^{-1}(b)$ is in bijection with the set of equivalence classes of torsion free sheaves $\LL$ on $Y_b$ of generic rank $1$ with $\det \pi_* \LL = \OO_X$.
   \end{prop}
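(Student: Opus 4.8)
The plan is to obtain the statement as a direct consequence of the spectral correspondence, Theorem~\ref{spectralcorrespondence}, once the hypothesis on $V$ has been converted into a genericity statement about the irreducibility of $Y_b$. All the substantive content is already contained in that theorem and in the preceding discussion of integrality; what remains is to combine the relevant openness and genericity statements.

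First I would reinterpret the hypothesis. By the local description \eqref{eq:localsheaf}, the spectral curve $Y_b$ is reducible exactly when $b=a^2$ for some $a\in H^0(X,V)$, i.e.\ exactly when $b$ lies in the image of the squaring map $H^0(X,V)\to\BB$. As shown above, this image is closed, so the locus $\{b\in\BB: Y_b \text{ reducible}\}$ is a closed subset of $\BB$. The assumption that the squaring map is not surjective makes this locus a \emph{proper} closed subset; hence its complement, the locus where $Y_b$ is irreducible, is dense and open, so a generic $b\in\BB$ has $Y_b$ irreducible.

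Next I would verify that a generic $b$ also lies in the subset $\BB'$ where Theorem~\ref{spectralcorrespondence} applies. Since $X$ is integral, a section of $\Sym^2 V$ that vanishes at the generic point vanishes identically; thus $b(\eta)\in B\setminus\{0\}$ for every $b\neq 0$, and so $\BB'=\BB\setminus\{0\}$, which is generic. Intersecting the two dense open conditions, a generic $b$ satisfies both $b\in\BB'$ and $Y_b$ irreducible, and for such $b$ the theorem provides a bijection between $h^{-1}(b)$ and the equivalence classes of torsion-free, generic-rank-$1$ sheaves $\LL$ on $\tilde{Y}_b$ with $\det\pi_*\LL=\OO_X$.

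It remains to match the curve $\tilde{Y}_b$ of the theorem with the curve $Y_b$ of the statement, and this is the step I would examine most carefully. I would reuse the argument from the proof of Theorem~\ref{spectralcorrespondence}: any sheaf $\LL$ on $Y_b$ with $\pi_*\LL$ locally free is supported on the closed subscheme $\tilde{Y}_b\hookrightarrow Y_b$, because the structure morphism $\pi_*\OO_{Y_b}\to\End(\pi_*\LL)$ factors through the torsion-free quotient $\tilde{\pi}_*\OO_{\tilde{Y}_b}$. Hence the torsion-free generic-rank-$1$ sheaves on $Y_b$ carrying the relevant determinant condition are precisely those on $\tilde{Y}_b$, and the bijection of the theorem is exactly the one asserted. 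The only genuine delicacy here is this identification when $Y_b$ is irreducible but a priori non-reduced; once it is granted, the proposition follows formally.
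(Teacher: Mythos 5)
Your proposal is correct and follows essentially the same route as the paper: the paper's ``proof'' of Proposition \ref{integral} is exactly the preceding discussion (reducibility of $Y_b$ if and only if $b=a^2$, closedness of the image of the squaring map $H^0(X,V)\to\BB$, hence generic irreducibility) followed by an appeal to Theorem \ref{spectralcorrespondence}. Your two additional checks --- that a generic $b$ lies in $\BB'$, and that torsion-free generic-rank-$1$ sheaves on $Y_b$ may be identified with those on $\tilde{Y}_b$ --- are points the paper leaves implicit, and your handling of them is sound.
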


   \subsubsection{Étale spectral curves} We consider now the case where the spectral curves are best behaved, which is when the section $b$ does not vanish.

   \begin{prop}
     If $x\in X$ is such that $b(x) \neq 0$, the induced map $\pi^{\an}: Y_b^\an \rightarrow X^\an$ in the analytifications is a local biholomorphism near any point $y$ with $\pi(y) = x$.
   \end{prop}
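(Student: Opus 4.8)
The plan is to reduce the statement to the étaleness of the universal spectral cover proved above, using that étale morphisms are stable under base change and become local biholomorphisms after analytification. The first step is to unpack the hypothesis: the $\ZZ_2$-action $(\lambda_1,\lambda_2)\mapsto(-\lambda_1,-\lambda_2)$ on $\CC^2$ has the origin as its unique fixed point, so the zero section of $V(B)$ is exactly the locus corresponding to $0\in B$, and $b(x)\neq 0$ says precisely that $b(x)$ lands in the twisted multiplicity-free locus, i.e.\ in the subset of $V(B)$ corresponding to $B\setminus\{0\}$. In a local trivialization of $V$ near $x$ with $b=(b_1,b_2,b_3)$ and $b_3^2=b_1b_2$, the relation forces $b(x)=0$ whenever $b_1(x)=b_2(x)=0$; hence $b(x)\neq 0$ gives, after relabelling, $b_1(x)\neq 0$.

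Next I would exploit the Cartesian square defining $Y_b$: the cover $\pi$ is the pullback of the twisted universal cover $V(p)\colon V(\chi^{-1}(0))\to V(B)$ along $b\colon X\to V(B)$. The earlier Proposition gives that $p\colon\chi^{-1}(0)\to B$ is étale over $B\setminus\{0\}$. Since the frame bundle $P$ is Zariski-locally trivial, over a trivializing open $U\subset X$ one has $V(p)|_U\cong \id_U\times p$, so $V(p)$ is étale over the locus of $V(B)$ corresponding to $B\setminus\{0\}$; as $b(x)$ lies there and étaleness is open and stable under base change, $\pi$ is étale on a neighbourhood of $\pi^{-1}(x)$. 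This matches the explicit local picture: where $b_1$ is invertible one eliminates $Y=(b_3/b_1)X$, so near $x$ the algebra $\FF=\pi_*\OO_{Y_b}$ is the rank-two $\OO_X$-algebra $\OO_X[X]/(X^2-b_1)$; in particular $\FF$ is already locally free near $x$, no torsion is present, and $Y_b=\tilde Y_b$ there.

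Finally, since $\pi$ is étale at $y$ and everything is of finite type over $\CC$, the analytification $\pi^{\an}$ is a local biholomorphism near $y$; explicitly $\sqrt{b_1}$ is holomorphic near $x$ because $b_1(x)\neq 0$, and each sheet $X=\pm\sqrt{b_1}$ realises $Y_b^{\an}$ as a holomorphic graph over $X^{\an}$ near the corresponding point of $\pi^{-1}(x)$. The only genuine bookkeeping — and the main, if mild, obstacle — is checking that the étaleness of $p$ over $B\setminus\{0\}$ descends to $V(p)$ over the corresponding twisted locus; this follows at once from Zariski-local triviality of the frame bundle, which reduces the claim to the already-proved untwisted statement.
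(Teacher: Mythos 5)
Your proof is correct, but it takes a genuinely different route from the paper's. The paper argues directly from the local presentation \eqref{eq:localsheaf}: the points of $Y_b$ above $x$ are $\pm(\sqrt{b_1(x)},\sqrt{b_2(x)})$, and $x'\mapsto \pm(\sqrt{b_1(x')},\sqrt{b_2(x')})$ (signs chosen compatibly with $y$) is a local holomorphic inverse; in other words, it repeats in the twisted setting the square-root computation already used for the universal cover $p$. You instead transport the universal statement by functoriality: $V(p)$ is Zariski-locally $\id_U\times p$, hence étale over the part of $V(B)$ corresponding to $B\setminus\{0\}$; étaleness is preserved under base change along $b\colon X\to V(B)$; and an étale morphism of finite-type $\CC$-schemes analytifies to a local biholomorphism. (One caveat: the earlier proposition is stated as ``generically étale'' over $B\setminus\{0\}$, but its proof does establish étaleness at every point of $B\setminus\{0\}$, which is what your reduction needs.) Both proofs ultimately rest on the same elementary fact, but your organization buys two things. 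First, it is cleaner about branches of the square root: with $b(x)\neq 0$ one can still have $b_1(x)=b_3(x)=0$ and $b_2(x)\neq 0$, in which case the paper's ``$\sqrt{b_1}$'' must be read as $b_3/\sqrt{b_2}$ (harmless, since $b_1=(b_3/\sqrt{b_2})^2$ is a perfect square near $x$, but glossed over); your relabelling so that $b_1(x)\neq 0$, followed by the elimination $Y=(b_3/b_1)X$ identifying $\FF|_U$ with $\OO_X|_U[X]/(X^2-b_1)$, free of rank two, avoids the issue entirely. Second, that identification shows $\FF$ is torsion-free near $x$, so $Y_b=\tilde{Y}_b$ and $\pi$ is flat over a neighbourhood of $x$ --- a fact the paper asserts separately when it begins the analysis of the general situation. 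The paper's proof is shorter and self-contained; yours lets the universal-spectral-data formalism do the work and yields slightly more information along the way.
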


   \begin{proof}
     This is clear from \eqref{eq:localsheaf}, since $y$ will be of the form $\pm (\sqrt{b_1(x)},\sqrt{b_2(x)})$, and we can define the local inverse $x' \mapsto \pm (\sqrt{b_1(x')},\sqrt{b_2(x')})$ (making the choice of sign compatible with the value of $y$).
   \end{proof}

   We say that a section $b\in \BB$ is \emph{nonvanishing} if $b(x)\neq 0$ for every $x\in X$.

   \begin{corol}\label{etale}
     If $b\in \BB$ is a nonvanishing section, the spectral cover $\pi:Y_b \rightarrow X$ is étale. In particular, if $Y_b$ is irreducible, the spectral curve $Y_b$ is a smooth projective curve of genus $2g-1$, where $g$ is the genus of $X$. 
     
     Moreover, in that case the fibre of the Hitchin morphism $h^{-1}(b)$ will be an abelian variety of dimension $g-1$.
   \end{corol}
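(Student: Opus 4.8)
The plan is to deduce everything from the analytic statement just proved together with the spectral correspondence of Theorem~\ref{spectralcorrespondence}.

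First I would establish that $\pi$ is étale. Since $b$ is nonvanishing, $b(x)\neq 0$ for every $x\in X$; in particular the generic point is sent to the multiplicity-free locus, so $b\in\BB'$ and $\pi$ has generic degree $2$. The previous proposition now applies at \emph{every} point and shows that $\pi^{\an}:Y_b^{\an}\to X^{\an}$ is a local biholomorphism everywhere. We already know that $\pi$ is a finite morphism. Invoking the standard comparison between algebraic and analytic étaleness---a finite-type morphism of complex varieties is étale precisely when its analytification is a local biholomorphism---I conclude that $\pi$ is a finite étale morphism, of constant degree $2$ over the connected base $X$.

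Next I would assume $Y_b$ irreducible and read off the geometry. As $\pi$ is étale and $X$ is smooth, $Y_b$ is smooth; being étale it is in particular flat, so $\pi_*\OO_{Y_b}$ is already locally free and there is no torsion to remove, i.e. $\tilde{Y}_b=Y_b$. An irreducible smooth curve is integral, so $Y_b$ is a smooth projective curve. Denoting its genus by $g'$ and applying Riemann--Hurwitz to the degree-$2$ étale cover $\pi$, whose ramification divisor vanishes, I obtain $2g'-2=2(2g-2)$, hence $g'=2g-1$.

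Finally, since $\tilde{Y}_b=Y_b$ is a smooth curve of genus $g'=2g-1$, the last assertion of Theorem~\ref{spectralcorrespondence} identifies the fibre $h^{-1}(b)$ with an abelian variety of dimension $g'-g=g-1$ (concretely, the Prym variety of $\pi$). The only genuinely nontrivial input is the algebraic-to-analytic comparison used to upgrade the local biholomorphism to algebraic étaleness; the smoothness, the genus count, and the abelian variety all follow directly from results already established.
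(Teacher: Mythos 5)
Your proposal is correct and follows essentially the same route as the paper: apply the preceding proposition at every point to get that $\pi^{\an}$ is a local biholomorphism, identify this with algebraic étaleness, deduce smoothness of $Y_b$ from smoothness of $X$, compute the genus via Riemann--Hurwitz with vanishing ramification, and invoke the last part of Theorem~\ref{spectralcorrespondence} for the abelian variety. Your write-up is in fact slightly more careful than the paper's, since you explicitly record that $b\in\BB'$ and that étaleness forces $\tilde{Y}_b=Y_b$ (no torsion to remove), points the paper leaves implicit.
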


   \begin{proof}
     Since $b$ does not vanish, $\pi^\an$ is a local biholomorphism, which is equivalent to say that $\pi$ is étale.
     If $\pi$ is étale and $X$ is smooth, $Y_b$ must also be smooth. If this $Y_b$ is irreducible, its genus $g'$ is given by the Riemann--Hurwitz formula
     \begin{equation*}
       2g'-2 = \deg \pi (2g-2) ,
     \end{equation*}
     so 
     \begin{equation*}
       g' = 2g -1,
     \end{equation*}
     since $\deg \pi = 2$. 
   \end{proof}

   \begin{rmk}
     Of course, the existence of nonvanishing sections in $\BB$ is not guaranteed a priori. We expect that if $V$ satisfies certain conditions, nonvanishing sections can be found generically, however the question remains open.
   \end{rmk}

   \subsubsection{The general situation} \label{cuentas} We explore now the properties of the spectral curve $Y_b$ and of the modification $\tilde{Y}_b$ in the more general case where $b$ vanishes at some points of $X$. If we assume that $b$ is in the open subset $\BB'\subset \BB$ of sections that do not vanish at the generic point, the set $b^{-1}(0)$ of points where $b$ vanishes will be a finite subset of $X$.

   From the above discussion, it is clear that over a point $x$ outside of $b^{-1}(0)$ the spectral curve $Y_b$ is smooth and thus the spectral cover $\pi$ is flat. Therefore, we are reduced to study the case where $b(x)=0$.

   From \eqref{eq:localsheaf}, it is clear that the stalk of $\FF=\pi_*\OO_{Y_b}$ at a point $x$ is
   \begin{equation*}
   \FF_x = \frac{\OO_{X,x}[X,Y]}{(X^2-b_1,Y^2-b_2,XY-b_3)},
   \end{equation*}
   where the $b_i$ are actually their equivalence classes in $\OO_{X,x}$. This can in principle be decomposed as
   \begin{equation*}
     \FF_x = \OO_{X,x} \oplus X\cdot \OO_{X,x} \oplus Y \cdot \OO_{X,x}.
   \end{equation*}

   When $x\in b^{-1}(0)$, this module $\FF_x$ will not be free, so it will have a decomposition
   \begin{equation*}
     \FF_x = F_x \oplus T_x,
   \end{equation*}
   where $F_x$ is a free module and $T_x$ is a torsion module. Let us find the explicit form of this decomposition.

     Take a small neighbourhood $U$ of $x$ and a local coordinate $z:U\rightarrow \CC$, with $z(x)=0$ and inverse $\phi:\Delta \rightarrow U$, for $\Delta$ a disk centered at $0$, and define the polynomial functions $f_i=b_i \circ \phi: \Delta\rightarrow \CC$. Since $b(x) = 0$, we have that $f_i(0)=0$ for $i=1,2,3$. Therefore, we can write $f_1(Z)= Z^n g_1(Z)$ and $f_3(Z)=Z^m g_3(Z)$, for $g_1$ and $g_3$ not vanishing at $0$. Moreover, by switching $f_1$ and $f_2$, we can assume without loss of generality that $n\geq m$. If we define $a_i = g_i \circ z$, we have
     \begin{align*}
       b_1 &= z^n a_1 \\
       b_3 &= z^m a_3.
     \end{align*}

     We have the relation
     \begin{equation*}
       b_3 X - b_1 Y = (XY) X - X^2Y = 0
     \end{equation*}
     in the module $\FF_x$, so
     \begin{equation*}
       z^m (a_3 X - z^{n-m} a_1 Y) = 0.
     \end{equation*}
     This implies that $a_3 X - z^{n-m} a_1 Y$ is a torsion element of $\FF_x$. Now, the determinant
     \begin{equation*}
       \begin{vmatrix}
	 a_3 & -z^{n-m} a_1 \\
	 z^{n-m} a_1 & a_3
       \end{vmatrix}
       _{x} =
       [a_3^2 + z^{2(n-m)} a_1^2]_x.
     \end{equation*}

     Case $(1)$. If $n> m$ or $a_3(x)^2 + a_1(x)^2 \neq 0$, then this determinant is not $0$,
     so there is an isomorphism
     \begin{equation*}
       X \cdot \OO_{X,x} \oplus Y \cdot \OO_{X,x} \cong [z^{n-m} a_1 X + a_3 Y]\cdot \OO_{X,x} \oplus [a_3X-z^{n-m}a_1 Y] \cdot \OO_{X,x}.
     \end{equation*}
     This gives a decomposition $\FF_x = F_x \oplus T_x$, as a direct sum of a free $\OO_{X,x}$-module
     \begin{equation*}
       F_x = \OO_{X,x} \oplus [z^{n-m} a_1 X + a_3 Y]\cdot \OO_{X,x}
     \end{equation*}
     and a torsion module
     \begin{equation*}
       T_x = [a_3X-z^{n-m}a_1 Y] \cdot \OO_{X,x}.
     \end{equation*}
     Globally, we wrote $\FF=F \oplus T$, with $F=\tilde{\pi}_* \OO_{\tilde{Y}_b}$ a locally free sheaf of rank $2$ and $T$ the torsion part of $\FF$. Now we have given an explicit description of $T$ as a sum of skyscraper sheaves
     \begin{equation*}
       T = \bigoplus_{x \in b^{-1}(0)} i_{x,*} T_x,
     \end{equation*}
     with $i_x: \Spec(\OO_{X,x}) \rightarrow X$ the natural map induced by localization. 
     To sum up, we have obtained that near a point $x\in b^{-1}(0)$, if $n> m$ or $a_3(x)^2 + a_1(x)^2 \neq 0$, the modification $\tilde{Y}_b$ is described by the equations
     \begin{equation*}
       \begin{cases}
	 X^2 - f_1(Z) = 0\\
	 Y^2 - f_2(Z) = 0 \\
	 XY - f_3(Z) = 0 \\
	 g_3(Z) X - Z^{n-m} g_1(Z) Y = 0.
       \end{cases}
     \end{equation*}

     Case $(2)$. If $n=m$ and $a_3(x)^2+a_1(x)^2=0$, we have $a_1(x)=ia_3(x)$, for $i$ a particular choice of the square root of $-1$. Therefore,
      \begin{equation*}
     z^m a_3 (X-iY) = 0,
     \end{equation*} 
     so $a_3(X-iY)$ is a torsion element, and we have an isomorphism
     \begin{equation*}
     X\cdot \OO_{X,x} \oplus Y\cdot \OO_{X,x} \cong a_3 (X+iY)\cdot \OO_X \oplus a_3 (X-iY)\cdot \OO_X,
     \end{equation*} 
     so we can decompose $\FF_x=F_x\oplus T_x$ as a direct sum of a free  $\OO_{X,x}$-module
     \begin{equation*}
     F_x=\OO_{X,x} \oplus a_3 (X+iY)\cdot \OO_X
     \end{equation*} 
     and a torsion module
     \begin{equation*}
     T_x=a_3(X-iY) \OO_{X,x}.
     \end{equation*} 
     In this case we have that the modification $\tilde{Y}_b$ is locally described by the equations
     \begin{equation*}
       \begin{cases}
	 X^2 - if_3(Z) = 0\\
	 Y^2 - f_2(Z) = 0 \\
	 XY - f_3(Z) = 0 \\
	 g_3(Z) (X - iY) = 0.
       \end{cases}
     \end{equation*} 

     The description of $\tilde{Y}_b$ in terms of local equations allows us to give conditions for smoothness over $x$. We just have to apply the Jacobian criterion. We put $F_1(X,Y,Z)=X^2-f_1(Z)$, $F_2(X,Y,Z)=Y^2-f_2(Z)$, $F_3(X,Y,Z)=XY-f_3(Z)$ and $F_4^1(X,Y,Z)=g_3(Z)X-Z^{n-m}g_1(Z)Y$, for case $(1)$, and $F_4^2(X,Y,Z)=g_3(Z)(X-iY)$, for case $(2)$. We denote by $J^1(X,Y,Z)$ the Jacobian matrix of the functions $(F_1,F_2,F_3,F_4^1)$ and by $J^2(X,Y,Z)$ the Jacobian of $(F_1,F_2,F_3,F_4^2)$. The Jacobian criterion asserts that $\tilde{Y}_b$ (if it is irreducible) is smooth at a point $y=\tilde{\pi}^{-1}(x)$ if and only if $J^1(0,0,0)$, in case $(1)$, or $J^2(0,0,0)$, in case $(2)$, has rank greater or equal than $2$, the codimension of $\tilde{Y}_b$ inside the total space of $V$. Now,
     \begin{equation*}
     J^1(0) = \left.
     \begin{pmatrix}
	     2X & 0 & -f_1'(Z) \\ 	
	     0 & 2Y & -f_2'(Z) \\
	     Y & X & -f_3'(Z) \\
	     g_1(Z) & -Z^{n-m}g_1(Z) & (\cdots) Y
     \end{pmatrix} \right|_{0}
     =
     \begin{pmatrix}
	     0 & 0 & -f_1'(0) \\ 	
	     0 & 0 & -f_2'(0) \\
	     0 & 0 & -f_3'(0) \\
	     g_1(0) & 0 & 0
     \end{pmatrix} 
     \end{equation*} 
     and
     \begin{equation*}
     J^2(0) = \left.
     \begin{pmatrix}
	     2X & 0 & -if_3'(Z) \\ 	
	     0 & 2Y & -f_2'(Z) \\
	     Y & X & -f_3'(Z) \\
	     g_3(Z) & -ig_3(Z) & g_3'(Z)(X-iY) 
     \end{pmatrix} \right|_{0}
     =
     \begin{pmatrix}
	     0 & 0 & -if_3'(0) \\ 	
	     0 & 0 & -f_2'(0) \\
	     0 & 0 & -f_3'(0) \\
	     g_3(0) & g_3(0) & 0
     \end{pmatrix} 
     \end{equation*} 
     Both of these matrices have rank $\geq 2$ if and only if one of the $f_i'(0)$ is nonzero; that is, if $m$ equals $1$.

     We have proven:
     \begin{prop}\label{smooth}
       If the modification $\tilde{Y}_b$ is irreducible, then it is smooth if and only if $b$ does not have a multiple zero.
     \end{prop}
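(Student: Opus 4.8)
The plan is to make the smoothness question local, concentrate it at the finitely many zeros of $b$, and then read off smoothness directly from the explicit local equations for $\tilde Y_b$ via the Jacobian criterion. First I would note that over a point $x\notin b^{-1}(0)$ the section $b$ is nonvanishing, so by the preceding proposition on analytifications $\pi$ is a local biholomorphism and no torsion is removed there, whence $Y_b=\tilde Y_b$ is already smooth (this is exactly the content leading to Corollary \ref{etale}). Thus singularities of $\tilde Y_b$ can only sit over $b^{-1}(0)$, a finite set since $b\in\BB'$. Since $\tilde Y_b$ is assumed irreducible, the remark preceding Theorem \ref{spectralcorrespondence} makes it integral, so it is a genuine reduced curve and ``smooth'' means smooth as a variety; it then suffices to test the Jacobian criterion at each point $y=\tilde\pi^{-1}(x)$ with $x\in b^{-1}(0)$.

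Next I would use the four local equations for $\tilde Y_b$ inside $\Spec\OO_{X,x}[X,Y]$,
\begin{align*}
X^2 - f_1(Z) &= 0, & Y^2 - f_2(Z) &= 0, \\
XY - f_3(Z) &= 0, & g_3(Z)X - Z^{n-m}g_1(Z)Y &= 0,
\end{align*}
where the ambient is $3$-dimensional, so $\tilde Y_b$ has codimension $2$ and smoothness at $y$ is equivalent to the Jacobian having rank $\geq 2$. Because $f_1(0)=f_2(0)=0$, the point $y$ has coordinates $Z=X=Y=0$. The key structural observation is that at $y$ the $X$- and $Y$-derivatives of the first three equations all vanish, so their rows are $(-f_1'(0),0,0)$, $(-f_2'(0),0,0)$, $(-f_3'(0),0,0)$, all lying on the $Z$-line, while the last row is $(0,g_3(0),-[Z^{n-m}g_1]_{Z=0})$ with $g_3(0)\neq 0$, hence always of rank one and transverse to the $Z$-line. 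Therefore the Jacobian has rank $\geq 2$ if and only if at least one of $f_1'(0),f_2'(0),f_3'(0)$ is nonzero, i.e.\ if and only if one of the orders $n$, $\mathrm{ord}_0 f_2$, $m$ equals $1$.

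Finally I would translate this into the multiple-zero condition. From $b_3^2=b_1b_2$ one gets $f_3^2=f_1f_2$, hence $\mathrm{ord}_0 f_2 = 2m-n$; combined with the normalization $n\geq m$ this yields $2m-n\leq m\leq n$, so $2m-n=\min(n,m,2m-n)$ is exactly the vanishing order of the section $b$ at $x$. Consequently some $f_i'(0)\neq 0$ precisely when this order equals $1$, that is, when $x$ is a simple zero of $b$; equivalently, $\tilde Y_b$ is singular over $x$ exactly when $x$ is a multiple zero. Ranging over all $x\in b^{-1}(0)$ gives the asserted equivalence.

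I expect the main obstacle to be the bookkeeping in this last step: justifying carefully that the vanishing order of the \emph{section} $b$ at $x$ is $2m-n$ (so that ``multiple zero'' means $2m-n\geq 2$), and confirming that under the normalization $n\geq m$ this minimum is always realized by the $f_2$-component. A secondary point needing care is checking that torsion removal, which produced the fourth equation, does not hide a singularity, i.e.\ that the four displayed equations genuinely cut out $\tilde Y_b=\underline{\Spec}(F)$ scheme-theoretically near $y$ (as extracted from \eqref{eq:localsheaf}), so that the Jacobian criterion is applied to the correct scheme.
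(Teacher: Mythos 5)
Your proposal is correct and is essentially the paper's own argument: singularities are confined to points over $b^{-1}(0)$, the four displayed local equations cut out $\tilde{Y}_b$ (the paper's explicit decomposition $\FF_x = F_x \oplus T_x$ is precisely the justification for the point you flag as needing care), and the Jacobian criterion at the unique point over $x$ gives smoothness exactly when some $f_i'(0)\neq 0$. The one place where you improve on the paper is the final translation: the paper glosses this condition as ``$n$ or $m$ equals $1$,'' but as you observe the three vanishing orders are $n$, $2m-n$, $m$ with $2m-n\leq m\leq n$, so the correct criterion is $2m-n=1$ (for instance $(n,m)=(3,2)$ yields a smooth point even though $n,m>1$), and since $2m-n$ is the vanishing order of the section $b$ at $x$, this is exactly the ``no multiple zero'' condition that the proposition asserts.
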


     \section{Gauge theoretic equations and Hitchin--Kobayashi correspondence}
     \subsection{The generalized Hitchin equations}
     Higgs bundles twisted by a vector bundle appear as solutions to gauge theoretic equations that are a generalization of the Hitchin equations. These are defined over a compact Riemann surface that we also denote by $X$. 

     We fix once and for all a volume form $\omega_X \in \Omega^{1,1}(X)$ with $\int_X \omega_X=1$.
Consider $V$ a holomorphic vector bundle of rank $2$ over $X$ and fix $h$ a Hermitian metric on $V$. If the vector bundle $V$ is stable, we can choose the solution to the Hermitian Yang--Mills equations given by the theorem of Narasimhan--Seshadri.

Suppose that $(E,\varphi)$ a $V$-twisted Higgs bundle on $X$ and denote
\begin{equation*}
	\lambda_E = -i \pi \deg E \id_E \in \Omega^{0}(X,\End E).
\end{equation*} 
If $H$ is a Hermitian metric on $E$, the twisted endomorphism $\varphi$ has a smooth adjoint morphism $\varphi^\dagger:E\otimes V\rightarrow E$ with respect to the metric induced by $H$ and  $h$, so it makes sense to consider the composition $\varphi^\dagger \circ \varphi\in \Omega^0(X,\End E)$. Moreover, regarding $\varphi^\dagger$ as a smooth map $E\rightarrow E\otimes V^\vee$, we can also consider the composition  $\varphi \circ \varphi^\dagger$ and thus the commutator  $[\varphi,\varphi^\dagger] \in \Omega^0(X,\End E)$. 

\begin{defn}
	Let $(E,\varphi)$ be a $V$-twisted Higgs bundle.	We say that a Hermitian metric $H$ on $E$ is a \emph{solution to the generalized Hitchin equations} if
	\begin{equation*}
	\Lambda F_H + [\varphi,\varphi^\dagger] = \lambda_E,
	\end{equation*} 
	where $F_H$ is the curvature of the Chern connection associated to the metric $H$ on $E$ and $\Lambda$ is the adjoint operator of $\omega_X\wedge -$.
\end{defn}

We have the following \emph{Hitchin--Kobayashi correspondence}:

\begin{prop}\label{hitchinkobayashi}
	A $V$-twisted Higgs bundle $(E,\varphi)$ is polystable if and only if it admits a solution to the generalized Hitchin equations.
\end{prop}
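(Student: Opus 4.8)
The plan is to deduce this \emph{Hitchin--Kobayashi correspondence} from the general theorem for twisted quiver bundles of \'Alvarez-C\'onsul and Garc\'ia-Prada \cite{luisoscar}, by realizing a $V$-twisted Higgs bundle as a quiver bundle for the quiver $Q$ with a single vertex and a single loop. First I would set up the dictionary between the two types of objects. Writing the twisted endomorphism $\varphi:E\to E\otimes V$ as a morphism $\phi:V^\vee\otimes E\to E$ identifies a $V$-twisted Higgs bundle with a $Q$-bundle whose twisting bundle is $M=V^\vee$, equipped with the fixed Hermitian metric induced by $h$. The commuting condition $\varphi\wedge\varphi=0$ is a closed holomorphic condition that does not enter the definition of stability and is inherited by subobjects and by direct summands; hence it may simply be carried along, and no use of quivers \emph{with relations} is needed.

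Second, I would match the two stability notions. A subobject of the $Q$-bundle $(E,\phi)$ is a subbundle $F\subset E$ with $\phi(V^\vee\otimes F)\subset F$, that is, a $\varphi$-invariant subbundle $\varphi(F)\subset F\otimes V$; for $\rk E=2$ these are exactly the $\varphi$-invariant line subbundles appearing in the definition of (semi)stability. With the single natural stability parameter for a one-vertex quiver (governed by the slope $\thalf\deg E$), $\tau$-(semi)stability of $(E,\phi)$ as a $Q$-bundle coincides verbatim with (semi)stability of $(E,\varphi)$, and the associated notion of $\tau$-polystability matches the decomposition $(E,\varphi)=(L_1,\varphi_1)\oplus(L_2,\varphi_2)$ defining polystability here. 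In particular the restriction of $\varphi$ to each factor still satisfies the commuting condition, so the polystable quiver decomposition is automatically a decomposition of $V$-twisted Higgs bundles.

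Third, I would check that the quiver vortex equation of \cite{luisoscar} for this data specializes exactly to the generalized Hitchin equation. For the one-loop quiver the equation reads $\sqrt{-1}\,\Lambda F_H+\phi\phi^{*}-\phi^{*}\phi=\tau\,\id_E$, where $\phi^{*}$ is the adjoint taken with respect to $H$ and the fixed metric on $V^\vee$. Under the identification of $\phi$ with $\varphi$, the contraction of the $V^\vee$-factor by $h$ turns $\phi\phi^{*}-\phi^{*}\phi$ into the commutator $[\varphi,\varphi^\dagger]\in\Omega^0(X,\End E)$ defined before the statement, and the normalization $\int_X\omega_X=1$ together with the choice of $\tau$ dictated by the slope turns the right-hand side into $\lambda_E=-i\pi\deg E\,\id_E$. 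With the dictionary in place, the general theorem of \cite{luisoscar} gives both implications: its analytic part produces a solution whenever $(E,\varphi)$ is polystable, while the converse follows from the standard integration argument against the second fundamental form of an invariant subbundle.

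The genuinely analytic content---existence of solutions in the polystable case---is entirely supplied by the methods of \cite{luisoscar}, so the work here is purely a matter of translation. I expect the main obstacle to be the precise bookkeeping in the third step: verifying that the quadratic term of the quiver vortex equation, after contracting the twisting factor with $h$, is exactly $[\varphi,\varphi^\dagger]$ with the correct sign, and that the proportionality constants (the factors of $\pi$ and $i$ absorbed into $\tau$, together with any Hermite--Einstein constant of $h$) combine to give precisely $\lambda_E$. Once these constants are pinned down, the correspondence follows immediately.
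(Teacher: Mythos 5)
Your proposal is correct and follows essentially the same route as the paper: the authors also realize $(E,\varphi)$ as a twisted quiver bundle for the one-vertex, one-loop quiver and invoke the Hitchin--Kobayashi correspondence of \cite{luisoscar}*{Thm. 3.1}, noting (as you do) that the commuting condition $\varphi\wedge\varphi=0$ plays no role in the correspondence and can simply be carried along. The only difference is one of detail: the paper states the reduction in a few lines, while you spell out the dictionary (twisting by $V^\vee$, matching of stability and of the vortex equation with $[\varphi,\varphi^\dagger]$ and $\lambda_E$) that the paper leaves implicit.
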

\begin{proof}
	The generalized Hitchin equations are a special case of the \emph{twisted quiver vortex equations}, indroduced in \cite{luisoscar}. More precisely, we can regard $(E,\varphi)$ as a holomorphic $V$-twisted  $Q$-bundle on  $X$, where  $Q$ is the quiver
\begin{center}
  \begin{tikzcd}
    Q= \bullet \ar[loop right, "."]
  \end{tikzcd}
\end{center}
The Hitchin--Kobayashi correspondence for twisted quiver bundles \cite{luisoscar}*{Thm. 3.1} guarantees the existence of the solution $H$, given that $(E,\varphi)$ is polystable, and viceversa.
\end{proof}

\begin{rmk}
	Although the generalized Hitchin equations are a particular case of the more general gauge equations studied in \cite{luisoscar}, in that paper the commuting condition is not considered. However, this extra condition does not affect the Hitchin--Kobayashi correspondence.
\end{rmk}

\subsection{The moment map} Understanding the Hitchin--Kobayashi correspondence as an infinite-dimensional analogue of the Kempf--Ness theorem, $V$-twisted Higgs bundles can be studied in terms of a moment map. As above, we fix a rank $2$ holomorphic vector bundle $V$ on $X$ and a Hermitian metric $h$ on $V$.

We consider  $\EE\rightarrow X$ a rank  $2$ smooth complex vector bundle and fix a Hermitian metric $H$ on  $\EE$. We denote by  $\Aa_H$ the space of  $H$-unitary connections on  $\EE$. Of course, given a connection  $A\in \Aa_H$, the associated operator  $\delbar_A$ endows  $\EE$ with the structure of a holomorphic vector bundle. Finally, we denote
 \begin{equation*}
	 \HH = \Aa_H \times \Omega^0(X,\End \EE\otimes V).
\end{equation*} 

The tangent space of $\HH$ at any point  $(A,\varphi)$ is the vector space $\Omega^1(X,\uu_H(\EE))\times \Omega^0(X,\End \EE \otimes V)$, where $\uu_H(\EE)$ is the bundle of skew-Hermitian endomorphisms of $\EE$ for the metric  $H$. We can define a symplectic on  $\HH$ given by
 \begin{equation*}
  \Omega\left( (B_1,\psi_1), (B_2,\psi_2) \right) = - \int_X \tr(B_1 \wedge B_2) + 2i \Imm(\psi_1^\dagger \psi_2) \omega_X.
\end{equation*} 
The (real) \emph{gauge group} of $\EE$ is $\GG=\Omega^0(X,\UU_H(\EE))$ the group of sections of the bundle $\UU_H(\EE)$ given by $H$-unitary endomorphisms of  $\EE$. The group  $\GG$ acts symplectically by by conjugation on $\HH$. The Lie algebra of  $\GG$  is $\Omega^0(X,\uu_H(\EE))$.
Therefore, the moment map for the action of $\GG$ can be regarded as a map
 \begin{align*}
	 \mu:\HH & \longrightarrow \Omega^0(X,\uu_H(\EE)).
\end{align*} 
The same arguments that those given in \cite{hitchinselfduality}*{\S 4} show that this map is precisely
\begin{equation*}
	\mu(A,\varphi)=\Lambda F_A + [\varphi,\varphi^\dagger].
\end{equation*} 

Now, we can consider the associated symplectic quotient $\mu^{-1}(\lambda_{\EE})/\GG$. The space we are interested in is the subvariety
\begin{equation*}
	\MM_{\mathrm{gauge}}(\EE)= \left\{[(A,\varphi)]\in \mu^{-1}(\lambda_\EE)/\GG: \delbar_{A,V}\varphi = 0 \text{ and } \varphi \wedge \varphi = 0\right\},
\end{equation*} 
where $\delbar_{A,V}=\delbar_A \otimes \id_V + \id_{\End \EE}\otimes \delbar_V$ is the Dolbeaut operator induced on $\End \EE \otimes V$ by the connection $A$ and the holomorphic structure of $V$.

The decomposition $\End(\EE)=\uu_H(\EE) \oplus i \uu_H(\EE)$ induces an isomorphism between $\Omega^{1}(X,\uu_H(\EE))$ and $\Omega^{0,1}(X,\End \EE)$. This allows us to define a complex structure on $\Aa_H$ by $J(\alpha) = i\alpha$. Together with the natural complex structure on $\Omega^0(X,\End\EE \otimes V)$, this gives a complex structure on $\HH$. Moreover, this complex structure is compatible with the symplectic structure $\Omega$ and thus they give a K\"ahler structure. Since the action of the gauge group $\GG$ preserves these structures, the symplectic quotient is in fact a K\"ahler quotient. Therefore, in general $\mu^{-1}(\lambda_{\EE})/\GG$ is a K\"ahler orbifold. Since the conditions defining $\MM_{\mathrm{gauge}}(\EE)$ are analytic, the space $\MM_{\mathrm{gauge}}(\EE)$ is a complex analytic variety.

The Hitchin--Kobayashi correspondence given by the above Proposition \ref{hitchinkobayashi} implies that there is a canonical bijection between the set $S(d)$ of isomorphism classes of polystable $V$-twisted Higgs bundles on $X$ and the space $\MM_{\mathrm{gauge}}(\EE)$. 

Moreover, suppose that $X$ is a smooth complex projective curve, $X^{\mathrm{an}}$ denotes the Riemann surface obtained by analytification of $X$ and $\MM(d)^{\mathrm{an}}$ is the analytification of the moduli space $\MM(d)$ of semistable $V$-twisted Higgs bundles of degree $d$, where $d=\deg \EE$. We expect that the Hitchin--Kobayashi correspondence gives an analytic isomorphism between $\MM(d)^{\mathrm{an}}$ and $\MM_{\mathrm{gauge}}(\EE)$. This is in fact well-known folklore in the field, and similar results are obtained by applying the \emph{Kuranishi slice method}. The reader may refer to \cite{fan} for a detailed exposition of these techniques for the moduli space of (usual) Higgs bundles.

\section*{Acknowledgements}
The first and second authors thank the Indian Institute of Science (Bangalore) and the International Centre for Theoretical Sciences (Bangalore) for hospitality during the visits that led to this collaboration, and during the programme \emph{Moduli of bundles and related structures} (Code: ICTS/mbrs2020/02).

We thank Nigel Hitchin and Tony Pantev for fruitful discussions. We also thank David Alfaya, Peter Dalakov, André Oliveira, and the anonymous referee for their useful comments. 

\begin{bibdiv}
\begin{biblist}
\bib{alfaya}{article}{
      author={David Alfaya},
      author={André Oliveira},
      title={Lie algebroid connections, twisted Higgs bundles and motives of moduli spaces}, 
      date={2021},
      eprint={arxiv.org/abs/2102.12246},
}
\bib{luisoscar}{article}{
   author={\'{A}lvarez-C\'{o}nsul, Luis},
   author={Garc\'{\i}a-Prada, Oscar},
   title={Hitchin--Kobayashi correspondence, quivers, and vortices},
   journal={Comm. Math. Phys.},
   volume={238},
   date={2003},
   number={1-2},
   pages={1--33},
   issn={0010-3616},
   review={\MR{1989667}},
   doi={10.1007/s00220-003-0853-1},
}
\bib{bnr}{article}{
   author={Beauville, Arnaud},
   author={Narasimhan, M. S.},
   author={Ramanan, S.},
   title={Spectral curves and the generalised theta divisor},
   journal={J. Reine Angew. Math.},
   volume={398},
   date={1989},
   pages={169--179},
   issn={0075-4102},
   review={\MR{998478}},
   doi={10.1515/crll.1989.398.169},
}
\bib{biswasramanan}{article}{
   author={Biswas, I.},
   author={Ramanan, S.},
   title={An infinitesimal study of the moduli of Hitchin pairs},
   journal={J. London Math. Soc. (2)},
   volume={49},
   date={1994},
   number={2},
   pages={219--231},
   issn={0024-6107},
   review={\MR{1260109}},
   doi={10.1112/jlms/49.2.219},
}
\bib{chenngo}{article}{
   author={Chen, T. H.},
   author={Ng\^{o}, B. C.},
   title={On the Hitchin morphism for higher-dimensional varieties},
   journal={Duke Math. J.},
   volume={169},
   date={2020},
   number={10},
   pages={1971--2004},
   issn={0012-7094},
   review={\MR{4118645}},
   doi={10.1215/00127094-2019-0085},
}
\bib{diaconescuwallcrossing}{article}{
   author={Chuang, Wu-yen},
   author={Diaconescu, Duiliu-Emanuel},
   author={Pan, Guang},
   title={Wallcrossing and cohomology of the moduli space of Hitchin pairs},
   journal={Commun. Number Theory Phys.},
   volume={5},
   date={2011},
   number={1},
   pages={1--56},
   issn={1931-4523},
   review={\MR{2833316}},
   doi={10.4310/CNTP.2011.v5.n1.a1},
}
\bib{dalakov}{article}{
   author={Dalakov, Peter},
   title={Meromorphic Higgs bundles and related geometries},
   journal={J. Geom. Phys.},
   volume={109},
   date={2016},
   pages={44--67},
   issn={0393-0440},
   review={\MR{3545326}},
   doi={10.1016/j.geomphys.2016.01.005},
}
\bib{diaconescu}{article}{
   author={Diaconescu, D.-E.},
   title={Moduli of ADHM sheaves and the local Donaldson-Thomas theory},
   journal={J. Geom. Phys.},
   volume={62},
   date={2012},
   number={4},
   pages={763--799},
   issn={0393-0440},
   review={\MR{2888981}},
   doi={10.1016/j.geomphys.2011.12.018},
}
\bib{donagigaitsgory}{article}{
   author={Donagi, R. Y.},
   author={Gaitsgory, D.},
   title={The gerbe of Higgs bundles},
   journal={Transform. Groups},
   volume={7},
   date={2002},
   number={2},
   pages={109--153},
   issn={1083-4362},
   review={\MR{1903115}},
   doi={10.1007/s00031-002-0008-z},
}
\bib{donagimarkman}{article}{
   author={Donagi, Ron},
   author={Markman, Eyal},
   title={Spectral covers, algebraically completely integrable, Hamiltonian
   systems, and moduli of bundles},
   conference={
      title={Integrable systems and quantum groups},
      address={Montecatini Terme},
      date={1993},
   },
   book={
      series={Lecture Notes in Math.},
      volume={1620},
      publisher={Springer, Berlin},
   },
   date={1996},
   pages={1--119},
   review={\MR{1397273}},
   doi={10.1007/BFb0094792},
}
\bib{donagipantev}{article}{
   author={Donagi, R.},
   author={Pantev, T.},
   title={Langlands duality for Hitchin systems},
   journal={Invent. Math.},
   volume={189},
   date={2012},
   number={3},
   pages={653--735},
   issn={0020-9910},
   review={\MR{2957305}},
   doi={10.1007/s00222-012-0373-8},
}
\bib{donaldsonthomas}{article}{
   author={Donaldson, S. K.},
   author={Thomas, R. P.},
   title={Gauge theory in higher dimensions},
   conference={
      title={The geometric universe},
      address={Oxford},
      date={1996},
   },
   book={
      publisher={Oxford Univ. Press, Oxford},
   },
   date={1998},
   pages={31--47},
   review={\MR{1634503}},
}
\bib{fan}{article}{
      author={Yue Fan},
      title={Construction of the moduli space of Higgs bundles using analytic methods}, 
      year={2020},
      eprint={arxiv.org/abs/2004.07182},
}
\bib{hartshorne}{book}{
   author={Hartshorne, Robin},
   title={Algebraic geometry},
   note={Graduate Texts in Mathematics, No. 52},
   publisher={Springer-Verlag, New York-Heidelberg},
   date={1977},
   pages={xvi+496},
   isbn={0-387-90244-9},
   review={\MR{0463157}},
}
\bib{hitchinselfduality}{article}{
   author={Hitchin, N. J.},
   title={The self-duality equations on a Riemann surface},
   journal={Proc. London Math. Soc. (3)},
   volume={55},
   date={1987},
   number={1},
   pages={59--126},
   issn={0024-6115},
   review={\MR{887284}},
   doi={10.1112/plms/s3-55.1.59},
}
\bib{hitchinsystem}{article}{
   author={Hitchin, Nigel},
   title={Stable bundles and integrable systems},
   journal={Duke Math. J.},
   volume={54},
   date={1987},
   number={1},
   pages={91--114},
   issn={0012-7094},
   review={\MR{885778}},
   doi={10.1215/S0012-7094-87-05408-1},
}
\bib{hitchinnahm}{article}{
   author={Hitchin, Nigel},
   title={Remarks on Nahm's equations},
   conference={
      title={Modern geometry: a celebration of the work of Simon Donaldson},
   },
   book={
      series={Proc. Sympos. Pure Math.},
      volume={99},
      publisher={Amer. Math. Soc., Providence, RI},
   },
   date={2018},
   pages={83--95},
   review={\MR{3838880}},
}
\bib{hitchinloci}{article}{
   author={Hitchin, Nigel},
   title={Critical loci for Higgs bundles},
   journal={Comm. Math. Phys.},
   volume={366},
   date={2019},
   number={2},
   pages={841--864},
   issn={0010-3616},
   review={\MR{3922539}},
   doi={10.1007/s00220-019-03336-4},
}
\bib{kapustinwitten}{article}{
   author={Kapustin, A.},
   author={Witten, E.},
   title={Electric-magnetic duality and the geometric Langlands program},
   journal={Commun. Number Theory Phys.},
   volume={1},
   date={2007},
   number={1},
   pages={1--236},
   issn={1931-4523},
   review={\MR{2306566}},
   doi={10.4310/CNTP.2007.v1.n1.a1},
}
\bib{pantev}{article}{
author={Katzarkov, L.},
author={Orlov, D.},
author={Pantev, T.},
title={Notes on Higgs bundles and $D$-branes},
eprint={https://www.cimat.mx/Eventos/moduli_spaces2013/img/tony_pantev.pdf},
date={2013},
}
\bib{ngolemme}{article}{
   author={Ng\^{o}, Bao Ch\^{a}u},
   title={Le lemme fondamental pour les alg\`ebres de Lie},
   language={French},
   journal={Publ. Math. Inst. Hautes \'{E}tudes Sci.},
   number={111},
   date={2010},
   pages={1--169},
   issn={0073-8301},
   review={\MR{2653248}},
   doi={10.1007/s10240-010-0026-7},
}
\bib{nitsure}{article}{
   author={Nitsure, Nitin},
   title={Moduli space of semistable pairs on a curve},
   journal={Proc. London Math. Soc. (3)},
   volume={62},
   date={1991},
   number={2},
   pages={275--300},
   issn={0024-6115},
   review={\MR{1085642}},
   doi={10.1112/plms/s3-62.2.275},
}
\bib{seshadri}{book}{
   author={Seshadri, C. S.},
   title={Fibr\'{e}s vectoriels sur les courbes alg\'{e}briques},
   language={French},
   series={Ast\'{e}risque},
   volume={96},
   note={Notes written by J.-M. Drezet from a course at the \'{E}cole Normale
   Sup\'{e}rieure, June 1980},
   publisher={Soci\'{e}t\'{e} Math\'{e}matique de France, Paris},
   date={1982},
   pages={209},
   review={\MR{699278}},
}
\bib{simpsonhodge}{article}{
   author={Simpson, Carlos T.},
   title={Constructing variations of Hodge structure using Yang-Mills theory
   and applications to uniformization},
   journal={J. Amer. Math. Soc.},
   volume={1},
   date={1988},
   number={4},
   pages={867--918},
   issn={0894-0347},
   review={\MR{944577}},
   doi={10.2307/1990994},
}
\bib{simpsonlocal}{article}{
   author={Simpson, Carlos T.},
   title={Higgs bundles and local systems},
   journal={Inst. Hautes \'{E}tudes Sci. Publ. Math.},
   number={75},
   date={1992},
   pages={5--95},
   issn={0073-8301},
   review={\MR{1179076}},
}
\bib{simpsonmoduliI}{article}{
   author={Simpson, Carlos T.},
   title={Moduli of representations of the fundamental group of a smooth
   projective variety. I},
   journal={Inst. Hautes \'{E}tudes Sci. Publ. Math.},
   number={79},
   date={1994},
   pages={47--129},
   issn={0073-8301},
   review={\MR{1307297}},
}
\bib{simpsonmoduliII}{article}{
   author={Simpson, Carlos T.},
   title={Moduli of representations of the fundamental group of a smooth
   projective variety. II},
   journal={Inst. Hautes \'{E}tudes Sci. Publ. Math.},
   number={80},
   date={1994},
   pages={5--79 (1995)},
   issn={0073-8301},
   review={\MR{1320603}},
}
\bib{tortellathesis}{thesis}{
   author={Tortella, Pietro},
   title={$\Lambda$-modules and holomorphic Lie algebroids},
   organization={Université Lille 1 and SISSA},
   eprint={https://iris.sissa.it/retrieve/dd8a4bf7-0696-20a0-e053-d805fe0a8cb0/1963_6243_PhD_Tortella_Pietro.pdf},
   date={2011},
}
\bib{tortella}{article}{
   author={Tortella, Pietro},
   title={$\Lambda$-modules and holomorphic Lie algebroid connections},
   journal={Cent. Eur. J. Math.},
   volume={10},
   date={2012},
   number={4},
   pages={1422--1441},
   issn={1895-1074},
   review={\MR{2925613}},
   doi={10.2478/s11533-012-0065-z},
}
\bib{xieyonekura}{article}{
   author={Xie, Dan},
   author={Yonekura, Kazuya},
   title={Generalized Hitchin system, spectral curve and $ \mathcal{N} $ =1 dynamics},
   volume={2014},
   ISSN={1029-8479},
   url={http://dx.doi.org/10.1007/JHEP01(2014)001},
   DOI={10.1007/jhep01(2014)001},
   number={1},
   journal={Journal of High Energy Physics},
   publisher={Springer Science and Business Media LLC},
   date={2014},
}

\end{biblist}
\end{bibdiv}
\end{document}